\documentclass[11pt,a4paper]{article}
\usepackage[T1]{fontenc}
\usepackage[utf8]{inputenc}
\usepackage{lmodern,microtype}
\usepackage[margin=27mm]{geometry}
\usepackage{amsmath,amssymb,amsthm}
\usepackage{booktabs,array}
\usepackage{tikz}
\usepackage[hidelinks]{hyperref}
\hypersetup{pdftitle={Counting with Hiccup Sequences},pdfauthor={Benoit Cloitre},
 pdfsubject={Hiccup sequences, nested recurrences, and combinatorial interpretations}}
\numberwithin{equation}{section}
\newtheorem{theorem}{Theorem}[section]
\newtheorem{proposition}[theorem]{Proposition}
\newtheorem{corollary}[theorem]{Corollary}

\theoremstyle{definition}
\newtheorem{definition}[theorem]{Definition}
\theoremstyle{remark}

\newcommand{\N}{\mathbb N}
\newcommand{\Nzero}{\mathbb N_0}
\newcommand{\im}{\operatorname{im}}
\newcommand{\rc}{\operatorname{rc}}
\newcommand{\seqnum}[1]{\href{https://oeis.org/#1}{\texttt{#1}}}
\newcommand{\floor}[1]{\left\lfloor #1\right\rfloor}
\newcommand{\ceil}[1]{\left\lceil #1\right\rceil}
\title{Counting with Hiccup Sequences}
\author{Beno\^{i}t Cloitre\\[1mm]
 \small ORCID \href{https://orcid.org/0009-0001-6778-153X}{0009-0001-6778-153X}}
\date{October 7, 2026}

\begin{document}
\maketitle

\begin{abstract}
The $(0,x,1,2)$-hiccup sequence starts at $x$ and increases by $1$
when the current index has already appeared as a value, and by $2$
otherwise. We prove directly that for $x\geq3$ the gaps between its
missing values are $1,2,3,\ldots$, each repeated $x-2$ times, and we
derive closed formulas, an explicit error bound, and a
generating function in terms of Ramanujan's theta functions. The
counting functions of the complements satisfy nested recurrences.
The sequences of missing values are the partial complements of
polygonal numbers. In particular, the
$(x-2)$-fold composition of the enumeration of the positive integers
that are not $x$-gonal is the hiccup sequence minus $1$. We extend the
nested recurrence and a tree interpretation to every miss increment
$z\geq2$ and every initial value $x\geq2$. Further interpretations
involve polyomino vertices, monochromatic pairs, and, by a theorem of
Bickle, Ramsey core numbers.
\end{abstract}

\section{Introduction}

A hiccup sequence chooses its next increment by testing whether an
integer has already appeared among its values. We introduced these
sequences in 2003 and stated the rule with $j=0$ in the introduction to
our joint paper with Sloane and Vandermast~\cite[Section 1]{CSV}, where
we deferred its study to a separate work that was never completed.
The present paper carries out part of that study. Several of these
sequences entered the OEIS in 2003, some submitted by Sloane. Fokkink
and Joshi~\cite{FJ} give a general treatment of hiccup sequences by
morphisms. We use their notation.

Throughout the paper, $\N=\{1,2,\ldots\}$ and
$\Nzero=\{0,1,2,\ldots\}$. Sequences are indexed from $1$ unless
stated otherwise.

\begin{definition}
Let $j,x\in\Nzero$ and let $y,z$ be distinct positive integers.
The $(j,x,y,z)$-hiccup sequence has $a(1)=x$ and, for $n\geq2$,
\begin{equation}
 a(n)=a(n-1)+
 \begin{cases}
 y,&n-j\in\{a(1),\ldots,a(n-1)\},\\
 z,&n-j\notin\{a(1),\ldots,a(n-1)\}.
 \end{cases}
\end{equation}
The first case is a \emph{hit} and the second a \emph{miss}.
\end{definition}

We take $j=0$ and $y=1$ throughout. We also take $z=2$, except in
Propositions~\ref{prop:generalnested} and~\ref{prop:tree}. The sequence
starting at $4$ begins
\[
4,6,8,9,11,12,14,15,16,18,\ldots,
\]
where the second and third steps are misses and the fourth is a hit,
because $4=a(1)$ is already a value. Its missing values
\[
1,2,3,5,7,10,13,17,21,\ldots
\]
have gaps $1,1,2,2,3,3,4,4,\ldots$, and we show that the same pattern,
with each gap repeated $x-2$ times, holds for every initial value $x\geq3$.

For $x=1$ and $x=2$, we have $a(n)=n+1$ for $n\geq2$. The only
missing integer is $2$ when $x=1$ and $1$ when $x=2$. For $x\geq3$,
put $q=x-2$ and write $a_q$ for the $(0,q+2,1,2)$-hiccup sequence.
Let $c_q(1)<c_q(2)<\cdots$ enumerate its missing positive integers,
and set
\begin{equation}
 C_q(n)=\#\{j\geq1:c_q(j)\leq n\},\qquad C_q(0)=0.
\end{equation}
The first terms of $a_q$ for $1\leq q\leq4$ are
\[
\begin{array}{c|rrrrrrrr}
x\backslash n&1&2&3&4&5&6&7&8\\ \hline
3&3&5&6&8&9&10&12&13\\
4&4&6&8&9&11&12&14&15\\
5&5&7&9&11&12&14&15&17\\
6&6&8&10&12&14&15&17&18
\end{array}
\]

Theorem~\ref{thm:complement} shows that
\[
a_q(n)=n+q+C_q(n),\qquad c_q(j+1)-c_q(j)=\ceil{j/q},
\]
which leads to a closed formula for every initial value. For $1\leq q\leq7$ it
simplifies to
\[
a_q(n)=\ceil{\bigl(\sqrt n+\sqrt{q/2}\bigr)^2},
\]
and for $q=8$ the same expression must be increased by $1$ at the odd
squares. The counting function $C_q$ satisfies a nested recurrence.
The identity $C_q(a_q(n))=C_q(n)+q$ also gives closed formulas for
the iterates of $a_q$.

The orbit of $1$ under $a_q$ is the sequence of $(q+2)$-gonal numbers.
Let $b_q$ enumerate the positive integers that are not $(q+2)$-gonal,
that is, the set
\[
\N\setminus\left\{q\binom m2+m:m\geq1\right\},
\]
and write $u^{\circ t}$ for the $t$-fold composition of a map $u$, with
$u^{\circ0}$ the identity. In Section~\ref{sec:cloitre-nested} we prove
that
\[
b_q^{\circ q}(n)=a_q(n)-1.
\]
For example, $b_2(b_2(n))+1=a_2(n)$,
where $b_2$ enumerates the nonsquares.

We treated the cases $x=3,4,5$ and their counting interpretations in
an earlier paper~\cite[Theorems 5.1, 5.2, and 5.3]{Cloitre}.
The OEIS records the closed formulas for $x=3$ and $x=4$. Kimberling and
Brown~\cite{KB} study the sequences $c_q$ as partial complements of
polygonal numbers. The repeated gaps also follow from the morphism of
Fokkink and Joshi~\cite[Lemma 5, Equation (6)]{FJ}. We recalled the connection with
the meta-Fibonacci sequences of Ruskey and Deugau~\cite[Section 7]{RD}
in our earlier paper~\cite[Theorem 6.1]{Cloitre}.

We give a direct counting proof for all initial values and use it to
relate these descriptions. We derive the radical formulas and their
exact range of validity, the iteration identity above, and a tree
model for miss increments $z\geq2$ and initial values $x\geq2$.
Section~\ref{sec:series} gives the generating function of the
complement in terms of Ramanujan's theta functions.
Sections~\ref{sec:counting} and~\ref{sec:ramsey} give the
combinatorial interpretations.

\section{Complements}

\begin{theorem}\label{thm:complement}
For every integer $q\geq1$, the $(0,q+2,1,2)$-hiccup sequence $a_q$
has an infinite complement. For $n,j\geq1$,
\begin{align}
 a_q(n)&=n+q+C_q(n),\label{eq:aC}\\
 c_q(j+q)&=c_q(j)+j+q-1,\label{eq:comprec}\\
 c_q(j+1)-c_q(j)&=\ceil{j/q}.\label{eq:gaps}
\end{align}
\end{theorem}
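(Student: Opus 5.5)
We first prove \eqref{eq:aC} by induction on $n$, then use it to show that the values and the misses of $a_q$ interleave, and deduce \eqref{eq:comprec} by counting; \eqref{eq:gaps} then follows by summation.

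\emph{Step 1: \eqref{eq:aC}.} Since $a_q$ is strictly increasing with $a_q(1)=q+2$, the set $\{a_q(1),\ldots,a_q(n-1)\}$ contains $n$ exactly when $n$ is a value of $a_q$. (If $n$ is a value $a_q(m)$, then $m<n$, because $a_q(m)\geq m+q+1>m$.) So the step $a_q(n)-a_q(n-1)$ is $1$ if $n\notin\{c_q(j)\}$ and $2$ if $n$ is missing, that is, it equals $1+C_q(n)-C_q(n-1)$. Telescoping from $a_q(1)=q+2=1+q+C_q(1)$, where $C_q(1)=1$ because $1<q+2$ is missing, gives $a_q(n)=n+q+C_q(n)$.

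\emph{Step 2: interleaving.} By \eqref{eq:aC}, the integers in $[1,a_q(n)]$ consist of $n$ values of $a_q$ and $q+C_q(n)$ missing integers. Hence
\begin{equation*}
C_q(a_q(n))=C_q(n)+q .
\end{equation*}
Every miss step $a_q(n)=a_q(n-1)+2$ skips exactly one integer, namely $a_q(n-1)+1$. Thus the missing integers above $q+2$ are exactly the numbers $a_q(n-1)+1$ with $n$ missing, and there are infinitely many of them.

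For infiniteness we argue by contradiction. If only finitely many integers were missing, $C_q$ would be eventually constant. Then \eqref{eq:aC} shows that $a_q$ is eventually $n\mapsto n+\text{const}$, with constant at least $q+1$, and $C_q(a_q(n))=C_q(n)+q$ contradicts the fact that $C_q$ is eventually constant.

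\emph{Step 3: \eqref{eq:comprec}.} The first $q+1$ missing integers are $1,\ldots,q+1$. Every later miss $c_q(i)>q+2$ equals $a_q(c_q(k)-1)+1$ for some earlier miss $c_q(k)$. Counting shows that $c_q(k)$ generates $c_q(k+q)$: by Step 2, the number of misses up to $a_q(c_q(k)-1)$ is $C_q(c_q(k)-1)+q=k-1+q$, so the next miss is $c_q(k+q)$. Hence
\begin{equation*}
c_q(k+q)=a_q(c_q(k)-1)+1=c_q(k)-1+q+(k-1)+1=c_q(k)+k+q-1,
\end{equation*}
valid for all $k\geq1$, provided $c_q(k)\geq2$ so that $a_q(c_q(k)-1)$ is defined. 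For $k=1$ we have $c_q(1)=1$, and we check directly that $c_q(q+1)=q+1$, consistent with the formula. The main obstacle is this bookkeeping: we must verify that the skipped integer $a_q(c_q(k)-1)+1$ is not itself a value, and handle the boundary indices $k\leq q$, where the early misses $1,\ldots,q+1$ must be matched by hand. Checking small $q$ against the table guides this.

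\emph{Step 4: \eqref{eq:gaps}.} Set $d(j)=c_q(j+1)-c_q(j)$. Subtracting \eqref{eq:comprec} at $j$ from \eqref{eq:comprec} at $j+1$ gives $d(j+q)=d(j)+1$. The initial values are $d(1)=\cdots=d(q)=1$, since $c_q(1),\ldots,c_q(q+1)=1,\ldots,q+1$. Induction then yields $d(j)=\lceil j/q\rceil$.
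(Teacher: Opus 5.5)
Your proof is correct and follows the paper's route. You obtain \eqref{eq:aC} by summing increments, show that the integer $c_q(k)+k+q-1$ skipped at the miss step $c_q(k)\geq2$ has rank $k+q$, and finish with $d(j+q)=d(j)+1$ and $d(1)=\cdots=d(q)=1$. The only variation is that you get this rank, and also infiniteness, from $C_q(a_q(n))=C_q(n)+q$, which the paper introduces later as \eqref{eq:invariant}, instead of by ordering the skipped integers.

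The ``obstacles'' you flag are already settled by your own argument. The skipped integer lies strictly between two consecutive values, so it is not a value. For $2\leq k\leq q$ you have $c_q(k)=k\geq2$, so the general argument applies, and only $k=1$ needs the direct check $c_q(q+1)=q+1$.
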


\begin{proof}
Since every increment is positive and $a_q(n)\geq n+q+1$, an integer
$n$ that occurs as a value occurs before step $n$. For $n\geq2$, step
$n$ is therefore a miss exactly when $n$ lies in the complement, which
begins with $1,2,\ldots,q+1$. Summing the increments gives
\[
 a_q(n)=q+2+(n-1)+(C_q(n)-1),
\]
and this is~\eqref{eq:aC}, also for $n=1$.

At a missing index $c_q(j)\geq2$ the sequence jumps by $2$ and skips
the single integer
\[
 a_q(c_q(j))-1=c_q(j)+q+j-1.
\]
Every missing integer larger than $q+1$ is skipped in this way, and the
skipped integers increase with $j$. The complement is infinite, since
if it were finite, its largest element would produce a larger one. The
integer skipped at $c_q(j)$ is preceded in the complement by the $q+1$
initial missing integers and by the $j-2$ integers skipped at
$c_q(2),\ldots,c_q(j-1)$, so its rank is $j+q$. This
is~\eqref{eq:comprec} for $j\geq2$, and for $j=1$ the identity reduces
to $c_q(q+1)=q+1$.

The differences $d_j=c_q(j+1)-c_q(j)$ satisfy $d_{j+q}=d_j+1$
by~\eqref{eq:comprec}, and $d_1=\cdots=d_q=1$,
so~\eqref{eq:gaps} follows.
\end{proof}

Summing the gaps, we find that if $j-1=qm+r$ with $m\geq0$ and
$0\leq r<q$, then
\begin{equation}\label{eq:cclosed}
 c_q(j)=1+\frac{qm(m+1)}2+r(m+1).
\end{equation}

\subsection{Closed formulas}

Put
\begin{equation}
 B_q(m)=1+\frac{qm(m-1)}2\qquad(m\geq1).
\end{equation}
By~\eqref{eq:cclosed}, the missing integers in $[B_q(m),B_q(m+1))$ are
the $q$ numbers
\[
 B_q(m),\ B_q(m)+m,\ \ldots,\ B_q(m)+(q-1)m,
\]
which gives the following count.

\begin{corollary}\label{cor:allclosed}
For $q,n\geq1$, let
\[
 M=\floor{\frac{q+\sqrt{q^2+8q(n-1)}}{2q}}.
\]
Then
\begin{align}
 C_q(n)&=q(M-1)+1+\floor{\frac{n-B_q(M)}M},\\
 a_q(n)&=n+qM+1+\floor{\frac{n-B_q(M)}M}.
\end{align}
\end{corollary}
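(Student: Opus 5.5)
The plan is to show that $M$ is exactly the index of the block $[B_q(M),B_q(M+1))$ containing $n$, and then to count the missing integers below and inside that block using the description of the block given just before the corollary.

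\emph{Step 1: locating the block.} The quantity $B_q(m)=1+qm(m-1)/2$ is strictly increasing in $m\geq1$, with $B_q(1)=1$. Hence every $n\geq1$ lies in a unique interval $[B_q(m),B_q(m+1))$. The condition $B_q(m)\leq n$ is equivalent to $qm^2-qm-2(n-1)\leq0$, that is, to
\[
m\leq\frac{q+\sqrt{q^2+8q(n-1)}}{2q}.
\]
So the largest integer $m$ with $B_q(m)\leq n$ is the $M$ of the statement. By maximality $n<B_q(M+1)$, so $n$ lies in the $M$-th block.

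\emph{Step 2: counting complement elements.} By~\eqref{eq:cclosed}, the complement is the disjoint union over $m\geq1$ of the sets $\{B_q(m)+im:0\leq i<q\}$. For $m=1$ this set is $\{1,\dots,q\}$, and $B_q(2)=q+1$ begins the next block. Each complete block $m=1,\dots,M-1$ contributes exactly $q$ elements, for a total of $q(M-1)$.

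Inside the $M$-th block, the missing integers $\leq n$ are those $B_q(M)+iM\leq n$ with $0\leq i\leq q-1$. Their number is $1+\floor{(n-B_q(M))/M}$, provided this does not exceed $q$. This holds because $n-B_q(M)<B_q(M+1)-B_q(M)=qM$, so $\floor{(n-B_q(M))/M}\leq q-1$. Adding the two contributions gives the formula for $C_q(n)$.

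\emph{Step 3: the formula for $a_q$.} Substituting into~\eqref{eq:aC} gives
\[
a_q(n)=n+q+q(M-1)+1+\floor{(n-B_q(M))/M},
\]
which simplifies to the stated expression.

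The only delicate point is Step 1, namely checking that the floor of the real root yields exactly the largest valid $m$, including the equality case $n=B_q(m)$. This case is fine because the inequality $B_q(m)\leq n$ is non-strict, so it matches the floor exactly. Floating-point issues do not arise, since the statement is exact.
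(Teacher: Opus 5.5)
Your proposal is correct and follows the paper's proof: you place $n$ in the block $[B_q(M),B_q(M+1))$, count $q(M-1)$ missing integers in the earlier blocks and $1+\floor{(n-B_q(M))/M}$ in the current one, and then apply \eqref{eq:aC}. You also write out two steps the paper leaves implicit, namely solving the quadratic inequality $B_q(m)\leq n$ to identify $M$, and checking that $\floor{(n-B_q(M))/M}\leq q-1$.
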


\begin{proof}
The choice of $M$ means that $B_q(M)\leq n<B_q(M+1)$. There are
$q(M-1)$ missing integers below $B_q(M)$ and
$1+\lfloor(n-B_q(M))/M\rfloor$ in $[B_q(M),n]$. This gives the
formula for $C_q(n)$. The formula for $a_q(n)$ follows
from~\eqref{eq:aC}.
\end{proof}

\subsection{Radical formulas}

Inverting the quadratic~\eqref{eq:cclosed} generally leaves a
dependence on the residue of $j-1$ modulo $q$. For $q\leq7$ this
dependence disappears after rounding.

\begin{proposition}\label{prop:radical}
For $1\leq q\leq7$ and $n\geq1$,
\begin{align}
 C_q(n)
 &=\ceil{\sqrt{2qn}-\frac q2}
 =\floor{\frac{2-q+\sqrt{q^2+8q(n-1)}}2},\label{eq:radical}\\
 a_q(n)&=\ceil{\bigl(\sqrt n+\sqrt{q/2}\bigr)^2}.
 \label{eq:hiccupradical}
\end{align}
For every $q\geq8$, each expression in~\eqref{eq:radical} fails for
some $n$.
\end{proposition}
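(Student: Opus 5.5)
Since $C_q$ is a nondecreasing step function with $C_q(n)=j$ exactly for $c_q(j)\leq n<c_q(j+1)$, and both expressions in~\eqref{eq:radical} are nondecreasing in $n$, I would reduce each identity to two inequalities per $j\geq1$: one at the left endpoint $n=c_q(j)$ and one at the right endpoint $n=c_q(j+1)-1$. Each inequality compares a square root with an integer, so squaring turns it into a polynomial inequality. I would then substitute the closed form~\eqref{eq:cclosed}, writing $j-1=qm+r$ with $0\leq r<q$. For the right endpoint I use~\eqref{eq:gaps}, which gives $c_q(j+1)-1=c_q(j)+m$, since $\ceil{j/q}=m+1$ for every $j=qm+r+1$ with $0\le r<q$.

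For $g(n)=\sqrt{2qn}-q/2$, the identity $\ceil{g(n)}=j$ on the block means $(j-1+q/2)^2<2q\,c_q(j)$ and $2q\bigl(c_q(j)+m\bigr)\leq(j+q/2)^2$. Expanding with~\eqref{eq:cclosed} should give
\[
2q\,c_q(j)-(j-1+q/2)^2=2q+qr-r^2-\tfrac{q^2}{4}.
\]
Over $0\leq r<q$ this is smallest at $r=0$, where it equals $2q-q^2/4$. That is positive exactly when $q<8$, and for $q\geq8$ the inequality fails at $n=c_q(qm+1)$. Using the first identity, the right-endpoint difference should collapse to $\bigl(r-\tfrac{q-2}{2}\bigr)^2\geq0$, which always holds.

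For the floor expression $h(n)=\bigl(2-q+\sqrt{q^2+8q(n-1)}\bigr)/2$, I need $h(c_q(j))\geq j$ and $h(c_q(j+1)-1)<j+1$. I would relate the squared quantities to the ones already computed. For the left endpoint,
\[
8q(c_q(j)-1)+q^2-(2j+q-2)^2=4\bigl(2q+qr-r^2-\tfrac{q^2}4\bigr)-8q+q^2=4r(q-r)\geq0,
\]
which always holds. For the right endpoint the relevant quantity is
\[
(2j+q)^2-q^2-8q\bigl(c_q(j)+m-1\bigr)=(2r-q+2)^2+8q-q^2,
\]
and it must be strictly positive. Its minimum over $0\le r<q$ is $8q-q^2$ for even $q$ and $1+8q-q^2$ for odd $q$. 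Both are positive for $q\leq7$. For $q\geq8$ they are $\leq0$, so the floor form fails at $n=c_q(j+1)-1$ with $r=\floor{(q-2)/2}$, at least when $m$ is large enough that this $n$ exists, e.g.\ $m\geq1$. Thus both expressions hold for $q\leq7$, and each fails for every $q\geq8$.

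Finally,~\eqref{eq:hiccupradical} follows from~\eqref{eq:aC}. We have $\bigl(\sqrt n+\sqrt{q/2}\bigr)^2=n+q/2+\sqrt{2qn}$. Since $n+q$ is an integer,
\[
\ceil{n+\tfrac q2+\sqrt{2qn}}=n+q+\ceil{\sqrt{2qn}-\tfrac q2}=n+q+C_q(n).
\]
The main obstacle is only bookkeeping in the expansions: the right-endpoint block must be identified correctly, including the case $r=q-1$. The failure witnesses for $q\geq8$ must be checked to be genuine values of $n\geq1$; the equality cases at $q=8$ have to be counted as failures because the inequalities are strict.
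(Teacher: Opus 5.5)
Your proof is correct and is essentially the paper's argument. Your four endpoint inequalities, scaled by $2q$ or $8q$, are exactly the rounding-error identities $c_q(j)-\tau_q(j)=r(q-r)/(2q)$ and $y_q(j)+1-c_q(j)=(q-2r)^2/(8q)$ that the paper uses, applied at $j$ and $j+1$. Your witnesses for $q\geq8$ also match the paper's: $n=1$ for the ceiling, and the residue near $q/2$ for the floor. The hedge about needing $m\geq1$ is unnecessary, because the right-endpoint quantity $(2r-q+2)^2+8q-q^2$ does not depend on $m$, so $m=0$ already gives a valid witness (for example $n=4$ when $q=8$).
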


\begin{proof}
Put
\[
 \tau_q(j)=1+\frac{(j-1)(j+q-1)}{2q},\qquad
 y_q(j)=\frac{(2j-2+q)^2}{8q}.
\]
Writing $j-1=qm+r$, with $0\leq r<q$, gives
\begin{equation}\label{eq:roundingerror}
 c_q(j)-\tau_q(j)=\frac{r(q-r)}{2q},\qquad
 y_q(j)+1-c_q(j)=\frac{(q-2r)^2}{8q}.
\end{equation}
The floor expression in~\eqref{eq:radical} first reaches the value $j$
at $n=\lceil\tau_q(j)\rceil$, and this equals $c_q(j)$ for all $j$ if
and only if $r(q-r)<2q$ for every residue $r$. As
$\max_r r(q-r)=\lfloor q^2/4\rfloor$, this means $q\leq7$.

For these $q$ the number $\sqrt{2qn}-q/2$ is positive, and its ceiling
is at least $j$ if and only if $n>y_q(j)$. The second identity
in~\eqref{eq:roundingerror} shows that
\[
0\leq y_q(j)+1-c_q(j)\leq q/8<1,
\]
so $\lfloor y_q(j)\rfloor+1=c_q(j)$. Thus the ceiling expression and
$C_q(n)$ are both at least $j$ exactly when $n\geq c_q(j)$. Then~\eqref{eq:aC}
yields~\eqref{eq:hiccupradical}. For $q\geq8$ the ceiling expression
is at most $0$ at $n=1$, whereas $C_q(1)=1$.
\end{proof}

The floor expression first fails at $n=4$ for $q=8$ and at $n=3$ for
$q\geq9$. For $q=8$ the ceiling expression needs a correction at the odd squares.

\begin{corollary}\label{cor:q8}
For every $n\geq1$,
\begin{equation}\label{eq:q8}
 a_8(n)=\ceil{(\sqrt n+2)^2}
       +\mathbf1_{\{n\text{ is an odd square}\}}.
\end{equation}
\end{corollary}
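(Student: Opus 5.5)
The plan is to reduce \eqref{eq:q8} to a statement about $C_8$ and then rerun the ceiling half of the proof of Proposition~\ref{prop:radical} with $q=8$, keeping track of the one place where its strict inequality $y_q(j)+1-c_q(j)<1$ becomes an equality. By~\eqref{eq:aC} we have $a_8(n)=n+8+C_8(n)$. Since $(\sqrt n+2)^2=n+4+4\sqrt n$ and $n+4$ is an integer,
\[
\ceil{(\sqrt n+2)^2}=n+8+\ceil{4\sqrt n-4}.
\]
Here $4\sqrt n-4=\sqrt{2qn}-q/2$ for $q=8$. So \eqref{eq:q8} is equivalent to
\[
C_8(n)=\ceil{4\sqrt n-4}+\mathbf1_{\{n\text{ is an odd square}\}}.
\]

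Fix $j\geq1$. For any real $t$, $\ceil t\geq j$ holds if and only if $t>j-1$. With $t=4\sqrt n-4$ this becomes $n>y_8(j)=\bigl((j+3)/4\bigr)^2$, and no positivity hypothesis is needed. Write $j-1=8m+r$ with $0\leq r<8$. The second identity in~\eqref{eq:roundingerror} gives
\[
y_8(j)+1-c_8(j)=\frac{(4-r)^2}{16}\in[0,1].
\]
The value $1$ occurs only for $r=0$.

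\emph{Case $r\neq0$.} Then $\floor{y_8(j)}+1=c_8(j)$. Exactly as in the proof of Proposition~\ref{prop:radical}, the events ``$\ceil{4\sqrt n-4}\geq j$'' and ``$C_8(n)\geq j$'' coincide for every $n$.

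\emph{Case $r=0$.} Then $c_8(j)=y_8(j)$ is an integer, and by~\eqref{eq:cclosed} it equals $1+4m(m+1)=(2m+1)^2$. The ceiling expression is at least $j$ exactly for $n\geq c_8(j)+1$, while $C_8(n)\geq j$ holds exactly for $n\geq c_8(j)$. So the two events differ only at $n=(2m+1)^2$.

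Summing over $j$, we use $C_8(n)=\#\{j\geq1:C_8(n)\geq j\}$ and the analogous identity for the ceiling, which is nonnegative for $n\geq1$. The difference $C_8(n)-\ceil{4\sqrt n-4}$ therefore equals the number of $j\equiv1\pmod 8$ with $c_8(j)=n$. This number is $1$ when $n$ is an odd square, since $m\mapsto(2m+1)^2$ runs over all odd squares, and $0$ otherwise. The case $n=1$, where $m=0$ and the ceiling vanishes, is covered automatically.

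The main obstacle is the boundary case $r=0$. There the argument of Proposition~\ref{prop:radical} breaks, because $y_8(j)$ lands exactly on $c_8(j)$ instead of strictly above $c_8(j)-1$. One must check that this happens only for $r=0$ (since $(4-r)^2=16$ forces $r\in\{0,8\}$) and identify these $c_8(j)$ as precisely the odd squares via~\eqref{eq:cclosed}.
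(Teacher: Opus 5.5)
Your proposal is correct and follows the paper's proof. Both rerun the ceiling half of Proposition~\ref{prop:radical} with $q=8$ and observe that $y_8(j)+1-c_8(j)=(4-r)^2/16$ equals $1$ only for $r=0$. Hence the ceiling expression first reaches $j$ at $c_8(j)+1$ rather than $c_8(j)$ exactly when $j=8m+1$, that is, at $c_8(8m+1)=(2m+1)^2$, and both then add $n+8$ via~\eqref{eq:aC}. You supply details the paper leaves implicit: the reduction $\lceil(\sqrt n+2)^2\rceil=n+8+\lceil 4\sqrt n-4\rceil$, the containment of the two events for each $j$, and the nonnegativity of the ceiling that the counting identity needs.
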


\begin{proof}
For $q=8$ the second quantity in~\eqref{eq:roundingerror} is at most
$1$, with equality if and only if $r=0$. Hence the ceiling expression
reaches the value $j$ at $c_8(j)$ when $r\neq0$, and at $c_8(j)+1$
when $r=0$. Since $c_8(8m+1)=1+4m(m+1)=(2m+1)^2$, the ceiling
expression is smaller than $C_8$ by $1$ exactly at the odd squares.
Adding $n+8$ gives~\eqref{eq:q8}.
\end{proof}

The sequences $a_1$, $a_2$, and $a_3$ are \seqnum{A080036},
\seqnum{A080037}, and \seqnum{A080353} in the OEIS~\cite{OEIS}. Sloane
entered them in March 2003, from our work for~\cite{CSV}. The first two entries have an extra term at
index $0$, and their titles give the closed formulas
$a_1(n)=n+1+\operatorname{round}(\sqrt{2n})$ and
$a_2(n)=n+2+\floor{\sqrt{4n-3}}$, where $\operatorname{round}$ denotes
the nearest integer. Tiberio records the empirical formula
$a_2(n)=\ceil{(1+\sqrt n)^2}$ in \seqnum{A080037}.
Proposition~\ref{prop:radical} proves it, since for $q=2$ it gives
\[
a_2(n)=n+1+\ceil{2\sqrt n}
      =n+2+\floor{\sqrt{4n-3}}.
\]
Krizek notes in \seqnum{A080036} that the complement of $a_1$ is
\seqnum{A000124}, which is the case $q=1$ of~\eqref{eq:cclosed}.
For $q=3$, Corollary~\ref{cor:allclosed} proves the formula
stated in our earlier paper~\cite[Theorem 5.3]{Cloitre}.

\subsection{Error bounds}

\begin{proposition}
For $q,n\geq1$, define $\theta_q(n)$ by
\[
a_q(n)=n+\sqrt{2qn}+\frac q2+\theta_q(n).
\]
Then
\begin{equation}
0\leq\theta_q(n)
\leq1+\sqrt{2qn+\frac{q(q-8)}4}-\sqrt{2qn}
\leq1+\frac{\max\{0,q(q-8)\}}{8\sqrt{2qn}}.
\end{equation}
For $q\leq7$ this gives $0\leq\theta_q(n)<1$. For $q=8$, both endpoints
of $0\leq\theta_8(n)\leq1$ are attained infinitely often.
\end{proposition}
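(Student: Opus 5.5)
By \eqref{eq:aC}, $\theta_q(n)=C_q(n)-\sqrt{2qn}+q/2$. The plan is to fix $j=C_q(n)$, so that $c_q(j)\leq n<c_q(j+1)$, and to turn each inequality on $\theta_q(n)$ into an inequality between $n$ and the quantities $y_q(j)$, $y_q(j+1)$ from the proof of Proposition~\ref{prop:radical}. The key identity is the second one in~\eqref{eq:roundingerror}, namely $c_q(i)=y_q(i)+1-(q-2r)^2/(8q)$ with $i-1=qm+r$. Note also that $2q\,y_q(i)=(i-1+q/2)^2$.

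\emph{Lower bound.} The inequality $\theta_q(n)\geq0$ says $j+q/2\geq\sqrt{2qn}$. Squaring, this is $n\leq (j+q/2)^2/(2q)=y_q(j+1)$. Since $n$ is an integer with $n<c_q(j+1)$, we have $n\leq c_q(j+1)-1$. By~\eqref{eq:roundingerror}, $c_q(j+1)-1\leq y_q(j+1)$, which gives the bound.

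\emph{Upper bound.} The first upper inequality is equivalent to $j-1+q/2\leq\sqrt{2qn+q^2/4-2q}$. Squaring, using $2q\,y_q(j)=(j-1+q/2)^2$ and dividing by $2q$, it becomes $n\geq y_q(j)+1-q/8$. This holds because $n\geq c_q(j)=y_q(j)+1-(q-2r)^2/(8q)$ and $(q-2r)^2\leq q^2$ for $0\leq r<q$. The radicand is nonnegative, since $2qn+q^2/4-2q\geq q^2/4$ for $n\geq1$. For the second upper inequality, put $A=2qn$ and $\delta=q(q-8)/4$. If $\delta\leq0$, then $\sqrt{A+\delta}-\sqrt A\leq0$. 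If $\delta>0$, then $\sqrt{A+\delta}-\sqrt A=\delta/(\sqrt{A+\delta}+\sqrt A)\leq\delta/(2\sqrt A)$. In both cases we obtain $\max\{0,q(q-8)\}/(8\sqrt{2qn})$.

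\emph{Consequences.} For $q\leq7$ we have $\delta<0$, so $\sqrt{A+\delta}<\sqrt A$ strictly, and hence $\theta_q(n)<1$. For $q=8$ the bounds reduce to $0\leq\theta_8\leq1$, and we read off attainment from Corollary~\ref{cor:q8}. At an odd square $n=(2m+1)^2$ the corollary gives $a_8(n)=n+4\sqrt n+5$, so $\theta_8(n)=1$. At an even square $n=(2m)^2$, with $m\geq1$, the ceiling is exact and there is no correction, so $a_8(n)=n+4\sqrt n+4$ and $\theta_8(n)=0$.

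The main obstacle is bookkeeping rather than ideas. One has to keep track of which index ($j$ or $j+1$) and which residue enter~\eqref{eq:roundingerror}, and to check that squaring is legitimate, that is, that both sides are nonnegative. For the lower bound this holds since $j\geq1$. For the upper bound, if $j-1+q/2$ were negative the inequality would be trivial, but it is not negative.
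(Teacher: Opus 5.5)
Your proof is correct and follows the paper's argument. Your $y_q(j)$ is exactly $\tau_q(j)+q/8-1$, so your bounds $c_q(j+1)-1\leq y_q(j+1)$ and $c_q(j)\geq y_q(j)+1-q/8$ are the paper's $c_q(j+1)\leq\tau_q(j+1)+q/8$ and $c_q(j)\geq\tau_q(j)$ from~\eqref{eq:roundingerror}, and the rationalization and the reading of the $q=8$ endpoints at odd and even squares from Corollary~\ref{cor:q8} match the paper as well.
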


\begin{proof}
Let $j=C_q(n)$, so that $\theta_q(n)=j+q/2-\sqrt{2qn}$
by~\eqref{eq:aC}. By~\eqref{eq:roundingerror},
$\tau_q(j)\leq c_q(j)\leq\tau_q(j)+q/8$. Combined with
$n\leq c_q(j+1)-1$, this gives
\[
n\leq\tau_q(j+1)+\frac q8-1
 =\frac{(j+q/2)^2}{2q},
\]
that is, $j+q/2\geq\sqrt{2qn}$, which is the lower bound. In the other
direction, $\tau_q(j)\leq c_q(j)\leq n$ implies
\[
j+\frac q2\leq1+\sqrt{2qn+\frac{q(q-8)}4},
\]
which is the middle bound. For $q<8$ the difference of square roots is
negative, so $\theta_q(n)<1$, and for $q\geq8$ rationalizing it yields
the last bound. Finally, Corollary~\ref{cor:q8} shows that $\theta_8$
equals $0$ at the even squares and $1$ at the odd squares.
\end{proof}

In particular, $a_q(n)=n+\sqrt{2qn}+O_q(1)$. Thus the initial value
determines the leading term of $a_q(n)-n$.

\section{Generating functions}\label{sec:series}

Let $w_q(n)$ be $0$ if $n$ is missing from $a_q$ and $1$ otherwise. By
Theorem~\ref{thm:complement},
\begin{equation}
w_q=(0)^q(01)^q(011)^q(0111)^q\cdots
   =\prod_{m\geq0}(01^m)^q,
\end{equation}
where the product denotes concatenation. Fokkink and
Joshi~\cite[Lemma 5, Equation (6)]{FJ} obtain the same word by
iterating the morphism
\[
\beta\longmapsto\beta0^q1,\qquad
0\longmapsto01,\qquad1\longmapsto1
\]
from $\beta$ and then replacing $\beta$ by $0$. This gives a second
proof of the gap formula.

\subsection{Missing values}

Since $\sum_{j\geq1}\lceil j/q\rceil z^j=z/((1-z)(1-z^q))$, summing
the gaps gives
\begin{equation}
G_q(z)=\sum_{j\geq1}c_q(j)z^{j-1}
 =\frac1{1-z}+\frac{z}{(1-z)^2(1-z^q)}.
\end{equation}
For $q=3$ this can be written
\begin{equation}\label{eq:partitionGF}
1+\sum_{j\geq1}c_3(j)z^j
=\frac{1+z^5}{(1-z)(1-z^2)(1-z^3)},
\end{equation}
which is the generating function of \seqnum{A008748}, whose term of
index $0$ is the constant term $1$. The formula recorded there reads
\[
c_3(j)=1+\floor{j(j+1)/6}\qquad(j\geq1),
\]
and the complement of $a_3$ begins $1,2,3,4,6,8,10,13,16,19,\ldots$.
Expanding~\eqref{eq:partitionGF} shows that $c_3(j)$ is the number of
partitions of $j$ into parts $1$, $2$, and $3$, with at most one
additional part $5$. Thus $a_3$ enumerates the positive integers that
are not values of \seqnum{A008748}. This proves the complement relation
suggested by De Vlieger in \seqnum{A080353}, after correcting the
sequence number A008478 to A008748.

\subsection{Theta functions}

By~\eqref{eq:cclosed}, the indicator series of the complement is
\begin{equation}\label{eq:missingGF}
H_q(z)=\sum_{j\geq1}z^{c_q(j)}
 =\sum_{m\geq0}\sum_{r=0}^{q-1}
 z^{1+qm(m+1)/2+r(m+1)}.
\end{equation}
Dividing by $1-z$ counts the missing integers up to $n$, so
by~\eqref{eq:aC}
\begin{equation}\label{eq:aGF}
\sum_{n\geq1}C_q(n)z^n=\frac{H_q(z)}{1-z},\qquad
\sum_{n\geq1}a_q(n)z^n
=\frac{z}{(1-z)^2}+\frac{qz+H_q(z)}{1-z}.
\end{equation}
Because the exponents in~\eqref{eq:missingGF} are quadratic in $m$,
$H_q$ can be written with Ramanujan's general theta function
\[
f(a,b)=\sum_{\nu\in\mathbb Z}
a^{\nu(\nu+1)/2}b^{\nu(\nu-1)/2},\qquad |ab|<1,
\]
and with $\psi(z)=f(z,z^3)=\sum_{m\geq0}z^{m(m+1)/2}$, following
Berndt~\cite[Chapter 16, p.~34 and Entry 22]{Berndt}. The terms with
indices $\nu$ and $1-\nu$ in $f(1,z)$ coincide, so
$\psi(z)=\tfrac12f(1,z)$.

\begin{proposition}
For every integer $q\geq1$,
\begin{equation}\label{eq:theta}
H_q(z)=\frac z2\sum_{r=0}^{q-1}f(z^r,z^{q-r})
       -\frac{(q-1)z}{2},
\end{equation}
as formal power series in $z$, each theta function being expanded in
powers of $z$, and hence for $|z|<1$.
\end{proposition}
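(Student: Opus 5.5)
We expand each $f(z^r,z^{q-r})$ directly from the definition and match exponents with~\eqref{eq:missingGF}. For $0\le r\le q-1$ the $\nu$-th term of $f(z^r,z^{q-r})$ is $z^{E_r(\nu)}$ with
\[
E_r(\nu)=r\frac{\nu(\nu+1)}2+(q-r)\frac{\nu(\nu-1)}2=\frac{q\nu(\nu-1)}2+r\nu .
\]
For $\nu\ge1$, write $\nu=m+1$ with $m\ge0$. This gives $E_r(m+1)=qm(m+1)/2+r(m+1)$, so $z\,E$ reproduces exactly the summand of~\eqref{eq:missingGF} indexed by $(m,r)$. Hence the positive-index halves give $\sum_r\sum_{\nu\ge1}z^{1+E_r(\nu)}=H_q(z)$.

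Next we handle $\nu\le0$ by the reflection $\nu\mapsto1-\nu$, as in the remark $\psi(z)=\tfrac12f(1,z)$. A direct computation gives
\[
E_r(1-\nu)=\frac{q\nu(\nu-1)}2+r(1-\nu)=E_{q-r}(\nu),
\]
so the nonpositive half of $f(z^r,z^{q-r})$ equals the positive half of $f(z^{q-r},z^{r})$. This is just the symmetry $f(a,b)=f(b,a)$. When $1\le r\le q-1$, the map $r\mapsto q-r$ permutes $\{1,\dots,q-1\}$, so summing over these $r$ turns the nonpositive halves into a second copy of the positive halves.

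The only care needed is the term $r=0$, because $q-r=q$ lies outside the index range. We treat $f(1,z^q)$ separately. Here $E_0(\nu)=q\nu(\nu-1)/2$, and the reflection $\nu\leftrightarrow1-\nu$ fixes this exponent. So $f(1,z^q)=2\sum_{\nu\ge1}z^{E_0(\nu)}$, and its nonpositive half again equals its positive half. Therefore
\[
\sum_{r=0}^{q-1}f(z^r,z^{q-r})=2\sum_{r=0}^{q-1}\sum_{\nu\ge1}z^{E_r(\nu)}+\Delta ,
\]
where $\Delta$ collects the boundary discrepancy. The reflected $r=q$ positive half, $\sum_{\nu\ge1}z^{E_q(\nu)}$ with $E_q(\nu)=q\nu(\nu+1)/2$, never appears in the sum. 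It must be compared with the $r=0$ half, whose exponents $q\nu(\nu-1)/2$ are the same shifted by one index. The two differ only by the $\nu=1$ term $z^0=1$ of the $r=0$ half. Chasing the terms $z^{E_r(1)}=z^r$ and $z^{E_r(0)}=1$ through the pairing then shows that the total surplus is the constant $q-1$. After multiplying by $z/2$, this gives $\frac z2\sum_r f-\frac{(q-1)z}2=H_q(z)$.

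The main obstacle is this bookkeeping of the $\nu=0,1$ boundary terms. To avoid sign and double-counting errors, we would settle it by checking constant terms. Every $f(z^r,z^{q-r})$ has $\nu=0$ term $1$, and every $(r,\nu=1)$ term is $z^r$. So the constant term of $\frac z2\sum_r f$ at $z^1$ is $\frac z2\cdot(q+1)$, since $r=0$ contributes two $1$'s, from $\nu=0$ and $\nu=1$. Meanwhile $H_q$ has coefficient $1$ at $z^1$, coming from $(m,r)=(0,0)$. The difference is $\frac{(q-1)z}2$, as claimed, and the remaining exponents match by the reflection argument. Convergence is not an issue: each $E_r(\nu)\to\infty$ as $|\nu|\to\infty$ and only finitely many $\nu$ give any exponent, so all identities hold coefficientwise as formal power series, hence for $|z|<1$.
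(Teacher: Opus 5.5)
Your plan is the paper's: the positive halves give $H_q/z$, you pair $r$ with $q-r$, and you treat $r=0$ by $\nu\leftrightarrow1-\nu$. But the central reflection step is false, and with it you lose the source of the correction $-(q-1)z/2$. The claimed identity $E_r(1-\nu)=E_{q-r}(\nu)$ fails. The two sides differ by $r-q\nu$, and already at $\nu=1$ the left side is $E_r(0)=0$ while the right side is $E_{q-r}(1)=q-r\geq1$.

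The reflection that realizes $f(a,b)=f(b,a)$ is $\nu\mapsto-\nu$, and for it $E_r(-\nu)=\frac{q\nu(\nu+1)}2-r\nu=E_{q-r}(\nu)$ does hold. It sends the terms with $\nu\leq-1$, not $\nu\leq0$, onto the positive half of the partner. So for $1\leq r\leq q-1$ the nonpositive half of $f(z^r,z^{q-r})$ is $1$ \emph{plus} the positive half of $f(z^{q-r},z^r)$, the extra $1$ being the $\nu=0$ term. Your assertion that these halves are equal is off by exactly the quantity the proposition is about.

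This makes the rest of the argument inconsistent. You correctly observe $f(1,z^q)=2\sum_{\nu\geq1}z^{E_0(\nu)}$. Combined with that, your reflection claim would give $\Delta=0$, whereas your constant-term check correctly gives $\Delta=q-1$.

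The discussion of the ``reflected $r=q$ half'' misplaces the surplus. The $r=0$ term has none, and comparing $E_q$ with $E_0$ produces a single $1$, not $q-1$. The sentence about chasing the terms through the pairing asserts the answer rather than deriving it.

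Checking one coefficient cannot close the gap. It determines the discrepancy only once you know the discrepancy is a constant, and that is what the faulty reflection was supposed to establish.

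The repair is short and gives the paper's proof. Put $S_r=\sum_{\nu\geq1}z^{E_r(\nu)}$ and split each theta series into the terms with $\nu\geq1$, $\nu=0$, and $\nu\leq-1$. This gives $f(z^r,z^{q-r})=S_r+1+S_{q-r}$ for $1\leq r\leq q-1$, while $f(1,z^q)=2S_0$. Hence $\sum_{r=0}^{q-1}f(z^r,z^{q-r})=2\sum_{r=0}^{q-1}S_r+(q-1)$. Multiplying by $z/2$ yields the claim, since $H_q=z\sum_rS_r$.
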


\begin{proof}
For $0\leq r<q$ put
\[
S_r(z)=\sum_{\nu\geq1}z^{q\nu(\nu-1)/2+r\nu}.
\]
The substitution $\nu=m+1$ in~\eqref{eq:missingGF} gives
$H_q(z)=z\sum_{r=0}^{q-1}S_r(z)$. On the other hand
$S_0(z)=\psi(z^q)=\tfrac12f(1,z^q)$, and for $1\leq r<q$ the terms of
index $\nu\geq1$, $\nu=0$, and $\nu\leq-1$ of the bilateral series give
\[
f(z^r,z^{q-r})=S_r(z)+1+S_{q-r}(z).
\]
Summing over $r$ yields~\eqref{eq:theta}.
\end{proof}

For example,
\[
H_1(z)=z\psi(z),\qquad
H_2(z)=z\psi(z^2)+\frac z2\bigl(f(z,z)-1\bigr).
\]
Substituting~\eqref{eq:theta} into~\eqref{eq:aGF} gives the generating
function of every $a_q$. By the Jacobi triple
product~\cite[Chapter 16, Entry 19]{Berndt}, each theta function
in~\eqref{eq:theta} is an infinite product,
\[
f(z^r,z^{q-r})
=(-z^r;z^q)_\infty(-z^{q-r};z^q)_\infty(z^q;z^q)_\infty,
\]
where $(u;v)_\infty=\prod_{m\geq0}(1-uv^m)$.

\section{Nested recurrences}

\begin{theorem}\label{thm:nested}
For $q\geq1$, the function $C_q$ is the unique sequence with $C_q(n)=n$ for
$1\leq n\leq q$ and
\begin{equation}
 C_q(n)=q+C_q\bigl(n-C_q(n-1)\bigr)\qquad(n>q),
\end{equation}
where both arguments on the right lie in $\{1,\ldots,n-1\}$.
\end{theorem}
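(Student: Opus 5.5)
Uniqueness is immediate once we know both arguments lie in $\{1,\ldots,n-1\}$: the recurrence then determines $C_q(n)$ from earlier values, starting from the prescribed $C_q(n)=n$ for $n\leq q$. So the work is to show that $C_q$ satisfies the initial conditions, the recurrence, and the range condition. The initial conditions hold because the complement begins $1,2,\ldots,q+1$ (Theorem~\ref{thm:complement}), so $C_q(n)=n$ for $n\leq q+1$.

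For the recurrence I will use the structure from Corollary~\ref{cor:allclosed}. The missing integers in $[B_q(m),B_q(m+1))$ are exactly $B_q(m)+im$ for $0\leq i<q$. Equivalently, $c_q(q(m-1)+1+i)=B_q(m)+im$, and $C_q$ increases by $1$ at each of these points. The key reformulation is that the recurrence holds for all $n>q$ if and only if, for $n>q$, $C_q(n)-C_q(n-1)=C_q(n-C_q(n-1))-C_q(n-1-C_q(n-2))$, together with one base case such as $n=q+1$. Rather than telescoping, I will verify the identity directly from the closed form. Write $n$ with $B_q(M)\leq n<B_q(M+1)$, so $C_q(n)=q(M-1)+1+\lfloor (n-B_q(M))/M\rfloor$.

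The main step is to evaluate $n-C_q(n-1)$ and show it lands in the block of index $M-1$, at the right offset. Put $n-1-B_q(M)=kM+s$ with $0\leq k<q$ and $0\leq s<M$, for the generic case $n-1\geq B_q(M)$. Then
\[
n-C_q(n-1)=B_q(M)-q(M-1)+kM+s-k=B_q(M-1)+k(M-1)+s,
\]
using $B_q(M)-B_q(M-1)=q(M-1)$. Its offset within block $M-1$ is $k(M-1)+s$ with $s\leq M-1$, so $C_q$ there equals $q(M-2)+1+\lfloor (k(M-1)+s)/(M-1)\rfloor$. This is $q(M-2)+1+k$ when $s<M-1$, and $q(M-2)+2+k$ when $s=M-1$. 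On the other side, $C_q(n)=q(M-1)+1+\lfloor (kM+s+1)/M\rfloor$, which gives the same split at $s=M-1$. After adding $q$ the two sides agree.

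I expect the main obstacle to be these boundary cases rather than the generic computation. First, $k=q-1$ with $s=M-1$ would push the offset to $q(M-1)$, the start of block $M$; this case cannot occur, since $n<B_q(M+1)$. Second, $n=B_q(M)$ means $n-1$ lies in block $M-1$; I will handle it by a separate computation giving $n-C_q(n-1)=B_q(M-1)$. Third, small $M$ ($M=1,2$, where $M-1$ or $M-2$ vanishes) I will check directly, since they cover only $n\leq q+1$ or so. The range condition follows from $1\leq C_q(n-1)\leq n-1$, because $C_q(1)=1$ and missing values are distinct, together with $n-C_q(n-1)\geq 1$. Strictness, $C_q(n-1)\geq1$, gives $n-C_q(n-1)\leq n-1$.
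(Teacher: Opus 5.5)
Your proof is correct, but it takes a different route from the paper. You verify the recurrence directly from the block description in Corollary~\ref{cor:allclosed}. For $n$ in block $M$, you show that $n-C_q(n-1)=B_q(M-1)+k(M-1)+s$ lies in block $M-1$. You then check that the floors $\lfloor (k(M-1)+s)/(M-1)\rfloor$ and $\lfloor (kM+s+1)/M\rfloor$ both jump exactly at $s=M-1$. The case $n=B_q(M)$, which is sent to $B_q(M-1)$, is treated separately. Two small remarks. First, $M=1$ never occurs for $n>q$, since block $1$ is $\{1,\ldots,q\}$. Second, $M=2$ needs no separate check, because your generic formulas divide only by $M-1=1$; so your estimate that these cases cover ``only $n\leq q+1$ or so'' is inaccurate but harmless. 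The difference reformulation you state is never used and can be dropped.

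The paper argues structurally instead. It observes that $C_q(a_q(r))=a_q(r)-r=q+C_q(r)$, so each interval $(r,a_q(r)]$ contains exactly $q$ missing integers. It then notes that $r=n-C_q(n-1)$ is the index of the first value of $a_q$ that is at least $n$. Since steps are at most $2$, either $a_q(r)=n$, or $a_q(r)=n+1$ and $n$ is missing; in both cases $C_q(n)=C_q(a_q(r))$.

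That argument needs no closed form. It also produces the invariant that is iterated in Theorem~\ref{thm:iterates}, and its key observation is reused for general miss increments in Proposition~\ref{prop:generalnested}. Your computation is tied to $z=2$ and to the quadratic block structure. In exchange it is entirely elementary, and it shows explicitly that $n\mapsto n-C_q(n-1)$ maps block $M$ into block $M-1$.
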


\begin{proof}
Exactly $r$ values of $a_q$ are at most $a_q(r)$, so
\begin{equation}\label{eq:invariant}
 C_q(a_q(r))=a_q(r)-r=q+C_q(r).
\end{equation}
For $n>q$, the number $r=n-C_q(n-1)$ is $1$ plus the number of
values of $a_q$ below $n$, so $a_q(r)$ is the first value that is at
least $n$. Since $a_q(1)=q+2\leq n+1$ and later steps add at most $2$,
either $a_q(r)=n$, or $a_q(r)=n+1$ and $n$ is missing. In both cases
$C_q(n)=C_q(a_q(r))$, and the recurrence follows from~\eqref{eq:invariant}.
Since $1\leq C_q(n-1)\leq n-1$, both arguments lie in $\{1,\ldots,n-1\}$,
and uniqueness follows by induction.
\end{proof}

The constant term $q$ in this recurrence is determined by the initial
value $x=q+2$. Equivalently, \eqref{eq:invariant} says that each
interval $(n,a_q(n)]$ contains exactly $q$ missing integers.

\subsection{General increments}

The same argument applies to miss increments greater than $2$.

\begin{proposition}\label{prop:generalnested}
Let $x\geq2$ and $k\geq1$ be integers. Let $g$ be the
$(0,x,1,k+1)$-hiccup sequence, and let
$U(n)$ count the positive integers at most $n$ outside $\im(g)$, with
$U(0)=0$. Then
\begin{equation}\label{eq:generaltransform}
 g(n)=n+x-k-1+kU(n).
\end{equation}
For $n\geq\max\{2,k,x-1\}$,
\begin{equation}\label{eq:generalnested}
 U(n)=x-k-1+
 \sum_{i=1}^{k}U\bigl(n-i+1-U(n-i)\bigr),
\end{equation}
and every argument on the right is a positive integer smaller than $n$.
\end{proposition}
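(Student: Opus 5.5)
The plan is to imitate the proofs of Theorem~\ref{thm:complement} and Theorem~\ref{thm:nested}, with one change. A miss now skips a block of $k$ consecutive integers, and the single nested term of Theorem~\ref{thm:nested} is replaced by a sum over a window of $k$ consecutive integers ending at $n$.

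\emph{Step 1 (the transform \eqref{eq:generaltransform}).} Since $g(1)=x\geq2$ and every increment is at least $1$, we have $g(m)\geq m+x-1\geq m+1$. Hence any value equal to $n$ occurs at an index smaller than $n$. So for $n\geq2$, step $n$ is a miss exactly when $n\notin\im(g)$. The integers $1,\ldots,x-1$ are missing, in particular $1$. Summing the increments gives $g(n)=x+(n-1)+k\,(U(n)-1)$, which is \eqref{eq:generaltransform}. The same bookkeeping describes the complement. It consists of the initial block $1,\ldots,x-1$ together with, for each miss step $m\geq2$, the $k$ skipped integers $g(m-1)+1,\ldots,g(m-1)+k$. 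These blocks are disjoint and lie above $x-1$.

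\emph{Step 2 (meaning of the arguments).} Fix $t\geq1$ and put $r(t)=t-U(t-1)$. Then $t-1-U(t-1)$ is the number of values of $g$ that are at most $t-1$, so $r(t)$ is the index of the first value that is at least $t$. In other words, $m\leq r(t)$ if and only if $g(m-1)<t$ (with the convention that $m=1$ always qualifies). Since index $1$ is missing, $U(r(t))=1+\#\{\text{miss steps }2\leq m\leq r(t)\}$. The arguments in \eqref{eq:generalnested} are $r(n-i+1)$ for $i=1,\ldots,k$, that is, $r(t)$ for $t$ in the window $W=\{n-k+1,\ldots,n\}$. The hypothesis $n\geq k$ gives $t-1\geq0$, so $U(t-1)$ is defined (using $U(0)=0$). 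Each argument satisfies $1\leq r(t)\leq t\leq n$. For $i=1$ we have $r(n)=n-U(n-1)<n$ because $U(n-1)\geq1$ when $n\geq2$, and for $i\geq2$ we have $r(t)\leq n-1$ trivially.

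\emph{Step 3 (double counting, the main step).} Sum $U(r(t))$ over $t\in W$. The constant $1$ contributes $k$. A fixed miss step $m$ contributes $\#\{t\in W:t\geq g(m-1)+1\}=\min\{k,\max\{0,n-g(m-1)\}\}$. This is exactly the number of integers in its skipped block $g(m-1)+1,\ldots,g(m-1)+k$ that are at most $n$. Summing over $m$ therefore counts every missing integer $\leq n$ outside the initial block. Because $n\geq x-1$, the initial block contributes exactly $x-1$ to $U(n)$. Hence
\[
\sum_{i=1}^k U\bigl(n-i+1-U(n-i)\bigr)=k+U(n)-(x-1),
\]
which rearranges to \eqref{eq:generalnested}.

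I expect the main obstacle to be this step. One must check that the clamp $\min\{k,\max\{0,\cdot\}\}$ matches the truncation of each skipped block at $n$ exactly, including the block that straddles $n$. One must also handle the boundary cases: the conventions for $m=1$ and $U(0)=0$, and blocks lying entirely below the window, which contribute $k$ on both sides. As a sanity check I would verify the case $k=1$ against Theorem~\ref{thm:nested}.
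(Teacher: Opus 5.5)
Your proof is correct and follows essentially the same route as the paper. Both arguments derive the transform by summing increments and identify $t-U(t-1)$ as the index of the first value of $g$ that is at least $t$. Both then count each miss block $g(m-1)+1,\ldots,g(m-1)+k$ as contributing $\min\{k,\max\{0,n-g(m-1)\}\}$ to $U(n)-(x-1)$ and to the window sum minus $k$; the paper phrases this as $t\mapsto U(t-U(t-1))$ increasing by $1$ exactly at the points $g(m-1)+1$.
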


\begin{proof}
Since $g(n)>n$, step $n\geq2$ is a hit if and only if $n\in\im(g)$. As
$1\notin\im(g)$, steps $2$ to $n$ contain $U(n)-1$ misses, and summing
the increments gives~\eqref{eq:generaltransform}.

For $t\geq1$ put $F(t)=U(t-U(t-1))$. As in the proof of
Theorem~\ref{thm:nested}, $t-U(t-1)$ is the index of the first value
of $g$ that is at least $t$. Hence $F(1)=1$, and $F$ increases by $1$
exactly at the integers
\[
 \delta_j=g(j-1)+1\qquad(j\geq2,\ j\notin\im(g)).
\]
The miss at step $j$ skips the $k$ integers
$\delta_j,\delta_j+1,\ldots,\delta_j+k-1$, and these pairwise disjoint
blocks, together with $1,\ldots,x-1$, make up the complement. For
$n\geq\max\{k,x-1\}$ we therefore have
\begin{align*}
 U(n)&=x-1+\sum_j\min\{k,\max\{0,n-\delta_j+1\}\},\\
 \sum_{i=0}^{k-1}F(n-i)
     &=k+\sum_j\min\{k,\max\{0,n-\delta_j+1\}\},
\end{align*}
where only finitely many summands are nonzero. Subtracting gives
\eqref{eq:generalnested}. If $n-i\geq1$, then
$1\leq U(n-i)\leq n-i$ and the argument $n-i+1-U(n-i)$ lies in
$[1,n-i]$. Otherwise $n=i=k$, and the argument is $1-U(0)=1<n$ because
$n\geq2$.
\end{proof}

For $x=k+1$ the constant in~\eqref{eq:generalnested} vanishes and
$U(n)=n$ for $1\leq n\leq k$. When $k\geq2$, these are the recurrence
and the initial values of the sequence $a_{0,k}$ of Ruskey and
Deugau~\cite[Equations (1) and (3)]{RD}, which counts the leaves at the
largest level of their labeled $k$-ary trees. Thus $U=a_{0,k}$, and
\eqref{eq:generaltransform} becomes their Theorem 31,
$g(n)=n+ka_{0,k}(n)$, where $g$ is denoted by $b_k$ in their Section 7.
For $k=2$, this is \seqnum{A045412}. We observed the relation with a
meta-Fibonacci recurrence jointly with Sloane, and Ruskey and Deugau
proved it. Proposition~\ref{prop:generalnested} proves the identity by
counting the skipped integers and applies to every initial value
$x\geq2$. For $a_q$ we have $k=1$, so the recurrence has a single term
and its constant is $q$.

\subsection{Trees}

The morphism of Fokkink and Joshi also leads to a tree model for all
the parameters of Proposition~\ref{prop:generalnested}. Recall that a
\emph{postorder} labeling of an ordered tree visits the subtrees of
each vertex from left to right and then the vertex itself.

\begin{proposition}\label{prop:tree}
Let $x\geq2$ and $k\geq1$ be integers, and let $K_h$ be the complete ordered
$k$-ary tree of height $h$, with $K_0$ a single leaf. Let $T_{-1}$ be a
single leaf and, for $h\geq0$, form $T_h$ by attaching to a new root,
in order, the tree $T_{h-1}$ followed by $x-2$ copies of $K_h$. Label
each $T_h$ in postorder, starting at $1$.

For every integer $n\geq1$, choose $h$ with $|T_h|\geq n$. The leaf
labels at most $n$ are exactly the positive integers at most $n$
missing from the $(0,x,1,k+1)$-hiccup sequence. Their number
is therefore $U(n)$, independently of the choice of $h$.
\end{proposition}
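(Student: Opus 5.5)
The plan is to show that the internal vertices of $T_h$, listed in postorder, reproduce the hiccup recursion: their labels are $g(1),g(2),\ldots$ and the leaf labels are the complement. First I will record the routine facts. The postorder word of $T_h$, written with $0$ for a leaf and $1$ for an internal vertex, is the word of $T_{h-1}$ followed by $x-2$ copies of the word of $K_h$ and a final $1$. Hence the labeling of $T_{h-1}$ is a prefix of that of $T_h$. Since $|T_h|$ is strictly increasing, this gives the independence of the choice of $h$ once the first claim is proved. Also $T_0$ is a root with $x-1$ leaf children, so $T_0$ carries the labels $1,\ldots,x-1$ on leaves and $x$ on the root. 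This matches $g(1)=x$ and the missing integers $1,\ldots,x-1$.

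The key structural step is a pruning lemma. Deleting all leaves of $T_h$ yields an ordered tree isomorphic to $T_{h-1}$, for $h\geq0$. Pruning turns each copy of $K_h$ into $K_{h-1}$ and turns $T_0$ into a single vertex, which is $T_{-1}$. So by induction the pruned $T_h$ is a root with children the pruned $T_{h-1}\cong T_{h-2}$ followed by $x-2$ copies of $K_{h-1}$, which is exactly $T_{h-1}$. Pruning preserves the left-to-right order of the surviving vertices and their postorder order. Hence, if $v_1<v_2<\cdots$ are the labels of the internal vertices of $T_h$, the vertex with label $v_n$ corresponds to the vertex with label $n$ in $T_{h-1}$. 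Moreover, the vertex labeled $v_n$ has only leaf children exactly when the vertex labeled $n$ is a leaf.

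Next I will compute the gaps $v_n-v_{n-1}$ for $n\geq2$. In postorder the label just before an internal vertex is that of its last child. By construction, every vertex of these trees either has all children leaves or has an internal last child. The children of the root of $T_h$ are $T_{h-1}$ and copies of $K_h$, all of height at least $1$ when $h\geq1$, and in $K_h$ all children have equal height. If the last child is internal, then $v_n=v_{n-1}+1$. If all children are leaves, the vertex is the root of a copy of $K_1$, since only the root of $T_0$, which is $v_1$, has $x-1$ leaf children. In that case the $k$ leaves immediately precede it, and their predecessor is the previous internal vertex. So $v_n=v_{n-1}+k+1$.

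Combining with the pruning lemma, $v_n-v_{n-1}$ equals $1$ or $k+1$ according as the label $n$ is internal or a leaf. An induction on $n$ then finishes the proof. Assume $v_i=g(i)$ for $i<n$. Since $g(i)>i$ and $v_{n-1}\geq n$, the label $n$ is already decided. It is internal if and only if $n\in\{g(1),\ldots,g(n-1)\}$, so $v_n=g(n)$. Thus the internal labels are exactly $\im(g)$, and the leaf labels at most $n$ are the missing integers, numbering $U(n)$. I expect the main obstacle to be stating the pruning isomorphism carefully at the base levels $h=0,1$, where $T_{-1}$ and $K_0$ are both single leaves. I would also check that the special root of $T_0$, with $x-1$ rather than $k$ leaf children, only affects $n=1$.
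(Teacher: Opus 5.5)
Your proof is correct, and it follows a different route from the paper's. The paper computes the postorder word of $K_h$ as $\phi^h(0)$, with $\phi(0)=0^k1$ and $\phi(1)=1$. It notes that the words of $T_0,T_1,\ldots$ are prefixes of $0\prod_{h\geq0}(W_h^{\,x-2}1)$. It then cites Fokkink and Joshi's Lemma~5 for the fact that the characteristic word of the hiccup sequence is generated by $\beta\mapsto\beta0^{x-2}1$, $0\mapsto0^k1$, $1\mapsto1$. The link between the hit/miss rule and the tree therefore rests entirely on that citation.

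Your pruning lemma is the tree form of this morphism. Giving the leftmost leaf of $T_{h-1}$ (which plays the role of $\beta$) $x-1$ new leaf children, and every other leaf $k$ of them, recovers $T_h$. Your gap computation together with the induction on $n$ is then a self-contained proof that this self-similar structure encodes the hiccup rule.

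What you gain is independence from the external lemma and a sharper statement. The $n$th internal vertex in postorder carries the label $g(n)$, so $g$ is exactly the relabeling induced by the isomorphism between the pruned $T_h$ and $T_{h-1}$. Each miss corresponds to a copy of $K_1$ whose $k$ leaves are the skipped integers, as in the proof of Proposition~\ref{prop:generalnested}. The paper's version is shorter and ties the trees directly to the morphism already used in Section~\ref{sec:series}.

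One step should be stated more carefully: that the label $v_n-k-1$, just before the leaves of a copy of $K_1$, is internal. The dichotomy you state (all children are leaves, or the last child is internal) does not by itself exclude a leaf there. Your height observations, however, give the stronger fact that every vertex has only leaf children or only internal children. Hence in postorder a leaf is followed only by a leaf sibling or by its parent. So the first leaf of such a block, whose label exceeds $x$, cannot be preceded by a leaf.
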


\begin{proof}
Encode leaves by $0$ and internal vertices by $1$. The postorder word
$W_h$ of $K_h$ satisfies $W_0=0$ and $W_{h+1}=W_h^k1$, so
$W_h=\phi^h(0)$ for the morphism $\phi(0)=0^k1$, $\phi(1)=1$. Since
$T_{h-1}$ is the first subtree of $T_h$, its vertices keep their labels
and their leaf status in $T_h$, and the postorder words of
$T_0,T_1,\ldots$ are successive prefixes of
\begin{equation}\label{eq:treeword}
 0\prod_{h\geq0}\bigl(W_h^{\,x-2}1\bigr).
\end{equation}
By the result of Fokkink and Joshi~\cite[Lemma 5, Equation (6)]{FJ},
the characteristic word of the $(0,x,1,k+1)$-hiccup sequence is
obtained from the fixed point of
\[
 \beta\longmapsto\beta0^{x-2}1,\qquad
 0\longmapsto0^k1,\qquad1\longmapsto1
\]
starting with $\beta$ by replacing $\beta$ with $0$. The
blocks appended at successive iterations are
$\phi^h(0^{x-2}1)=W_h^{\,x-2}1$, so this word is~\eqref{eq:treeword},
and its zeros are the missing integers.
\end{proof}

Each new root has $x-1$ children and every internal vertex of an
attached $K_h$ has $k$ children, so for $x=k+1$ all the trees are
$k$-ary. For $k=1$ and $x=q+2$ the attached trees are chains, and each
new root receives the previous tree followed by $q$ chains of
height $h$. For $x=2$ the trees $T_h$ are themselves chains, in
agreement with $U(n)=1$ for all $n\geq1$.

Ruskey and Deugau~\cite[Section 7, Lemma 30]{RD} describe the case
$x=k+1$ through the leftmost children of a tree read in preorder. Marking leaves
in postorder describes the same complement and extends to every initial value.

\section{Polygonal numbers}

\subsection{Partial complements}

For $q\geq1$, the $(q+2)$-gonal numbers, indexed from $0$, are
\begin{equation}
 P_q(m)=q\binom{m+1}{2}+m+1\qquad(m\geq0),
\end{equation}
so that $P_1,P_2,P_3,P_4$ are the triangular, square, pentagonal, and
hexagonal numbers, all starting at $1$. Kimberling and
Brown~\cite[Equation (7)]{KB} define $\widehat P_q$ by
$\widehat P_q(0)=1$ and
\[
 \widehat P_q(m)-\widehat P_q(m-1)
 =\floor{\frac{m+q-1}{q}}=\ceil{m/q}\qquad(m\geq1).
\]
For an increasing sequence $r$, write $r^*(1)<r^*(2)<\cdots$ for the
increasing enumeration of $\N\setminus\{r(m):m\geq0\}$. Theorem~\ref{thm:complement} then
gives
\begin{equation}\label{eq:KBidentification}
 c_q(j)=\widehat P_q(j-1),\qquad
 a_q=\widehat P_q^{\,*}.
\end{equation}

If moreover $r=(r(m))_{m\geq0}$ satisfies $r(0)=1$ and this complement
is infinite, the \emph{first partial complement} of $r$ is the sequence
$\mathcal T(r)=(u_m)_{m\geq0}$ defined by
\[
 u_0=1,\qquad u_{m+1}=r^*(u_m),
\]
where we keep the initial term $1$, as in the transformation $t$ of
Kimberling and Brown. They prove that
$\mathcal T(P_q)=\widehat P_q$~\cite[Theorem 2.3]{KB}, and the reverse
relation $\mathcal T(\widehat P_q)=P_q$ follows from their Theorems 2.1
and 2.3 and Corollary 4.2. We derive this relation directly from the
hiccup rule and give formulas for all iterates of $a_q$.

\begin{theorem}\label{thm:iterates}
For integers $q,n\geq1$ and $t\geq0$,
\begin{align}
 C_q\bigl(a_q^{\circ t}(n)\bigr)&=C_q(n)+qt,\label{eq:itercount}\\
 a_q^{\circ t}(n)&=n+tC_q(n)+q\binom{t+1}{2}.\label{eq:iterate}
\end{align}
In particular,
\begin{equation}
 a_q^{\circ t}(1)=P_q(t),\qquad
 \mathcal T(\widehat P_q)=P_q.
\end{equation}
\end{theorem}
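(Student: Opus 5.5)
We argue by induction on $t$, using the invariant \eqref{eq:invariant} from the proof of Theorem~\ref{thm:nested}, namely $C_q(a_q(r))=q+C_q(r)$ for every $r\geq1$. For $t=0$ both \eqref{eq:itercount} and \eqref{eq:iterate} are trivial. For the inductive step of \eqref{eq:itercount}, apply the invariant with $r=a_q^{\circ t}(n)$. This gives $C_q(a_q^{\circ(t+1)}(n))=q+C_q(a_q^{\circ t}(n))=C_q(n)+q(t+1)$.

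For \eqref{eq:iterate}, combine \eqref{eq:aC} with \eqref{eq:itercount}. Write $m_t=a_q^{\circ t}(n)$. Then
\[
m_{t+1}=m_t+q+C_q(m_t)=m_t+q+C_q(n)+qt .
\]
Summing these increments from $0$ to $t-1$ gives
\[
m_t=n+tC_q(n)+q\sum_{s=0}^{t-1}(s+1)=n+tC_q(n)+q\binom{t+1}2,
\]
which is \eqref{eq:iterate}. At $n=1$ we have $C_q(1)=1$, since $1$ is missing. Hence $a_q^{\circ t}(1)=1+t+q\binom{t+1}2=P_q(t)$ by the definition of $P_q$.

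It remains to identify $\mathcal T(\widehat P_q)$. By \eqref{eq:KBidentification}, the complement of $\{\widehat P_q(m):m\geq0\}$ is $\im(a_q)$. Since $a_q$ is strictly increasing, its increasing enumeration is $a_q$ itself, so $\widehat P_q^{\,*}=a_q$. Consequently $\mathcal T(\widehat P_q)$ is defined by $u_0=1$ and $u_{m+1}=a_q(u_m)$, so $u_m=a_q^{\circ m}(1)=P_q(m)$. The complement $\im(a_q)$ is infinite, so the partial complement is well defined.

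The only point needing care is this indexing bookkeeping. We must check that $r^*$ is the enumeration indexed from $1$, so that $\widehat P_q^{\,*}(n)=a_q(n)$ with the same offset, and that $C_q(1)=1$. Both follow directly from the conventions fixed earlier. No step is a genuine obstacle, since the invariant \eqref{eq:invariant} does all the work.
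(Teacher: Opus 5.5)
Your proof is correct and follows the paper's own argument. Both iterate \eqref{eq:invariant} to get \eqref{eq:itercount}, then sum the increments $q+C_q(n)+qh$ given by \eqref{eq:aC} to obtain \eqref{eq:iterate}, and then use $C_q(1)=1$ and $a_q=\widehat P_q^{\,*}$ from \eqref{eq:KBidentification} to identify $\mathcal T(\widehat P_q)$ with the orbit of $1$, that is, with $P_q$.
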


\begin{proof}
Iterating~\eqref{eq:invariant} yields~\eqref{eq:itercount}.
By~\eqref{eq:aC}, the step from $a_q^{\circ h}(n)$ to
$a_q^{\circ(h+1)}(n)$ is $q+C_q(n)+qh$, and summing over $0\leq h<t$
gives~\eqref{eq:iterate}. With $n=1$ and $C_q(1)=1$ this is $P_q(t)$,
and by~\eqref{eq:KBidentification} the orbit of $1$ under
$a_q=\widehat P_q^{\,*}$ is $\mathcal T(\widehat P_q)$ by definition.
\end{proof}

For $q\leq4$ the orbits of $1$ are the following figurate numbers.
\begin{center}
\begin{tabular}{cll}
\toprule
Hiccup parameters & Orbit of $1$ & Figurate numbers\\
\midrule
$(0,3,1,2)$ & $1,3,6,10,15,21,\ldots$ & triangular\\
$(0,4,1,2)$ & $1,4,9,16,25,36,\ldots$ & square\\
$(0,5,1,2)$ & $1,5,12,22,35,51,\ldots$ & pentagonal\\
$(0,6,1,2)$ & $1,6,15,28,45,66,\ldots$ & hexagonal\\
\bottomrule
\end{tabular}
\end{center}

\subsection{Iteration}\label{sec:cloitre-nested}

For $q\geq1$ put
\[
 \Pi_q(k)=q\binom{k}{2}+k\qquad(k\geq0),
\]
so that $\Pi_q(0)=0$ and $\Pi_q(k)=P_q(k-1)$ for $k\geq1$, and define
\begin{equation}
 h_q(n)=\#\{k\geq1:\Pi_q(k)<n\},\qquad f_q(n)=n-h_q(n).
\end{equation}
The function $h_q$ takes the value $k$ on the $qk+1$ integers of
$[L_k,L_{k+1})$, where $L_k=\Pi_q(k)+1$, and inverting the quadratic
$\Pi_q$ we find
\begin{equation}
 f_q(n)=n-\floor{\frac{q-2+\sqrt{(q-2)^2+8q(n-1)}}{2q}}.
\end{equation}
In an earlier paper~\cite[Theorem 3.1 and Equation (2)]{CloitreNested}, we proved that
this function, with parameter $m=q$, is the solution with $f_q(1)=1$ of
\begin{equation}\label{eq:cloitre-nested}
 f_q(n+1)=n-f_q^{\circ q}(n)+f_q^{\circ(q+1)}(n).
\end{equation}
Moreover,
\begin{equation}
 f_q(n+1)=f_q(n)
 \quad\Longleftrightarrow\quad
 n=\Pi_q(k)=a_q^{\circ(k-1)}(1)\quad\text{for some }k\geq1,
\end{equation}
so $f_q$ repeats a value exactly after the $(q+2)$-gonal numbers. Its
$q$th iterate is expressed through $C_q$ in the next theorem.

\begin{theorem}\label{thm:fC}
For every $q,n\geq1$,
\begin{align}
 f_q^{\circ q}(n)&=n+1-C_q(n),\label{eq:fC}\\
 a_q(n)&=2n+q+1-f_q^{\circ q}(n).\label{eq:hiccup-f}
\end{align}
\end{theorem}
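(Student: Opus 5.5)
The second identity~\eqref{eq:hiccup-f} follows from~\eqref{eq:fC} and~\eqref{eq:aC}: subtracting, $2n+q+1-(n+1-C_q(n))=n+q+C_q(n)=a_q(n)$. So the work is in~\eqref{eq:fC}. I would prove it by induction on the number of iterations, with $q$ fixed throughout. The iterates will be described by \emph{stall sets}, that is, by where each $f_q^{\circ t}$ fails to increase.

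First I record the structure of $f=f_q$. It is nondecreasing with $f(1)=1$, and $f(n+1)-f(n)\in\{0,1\}$. The difference is $0$ exactly when $n=\Pi_q(k)$ for some $k\geq1$. The repeated value there is $f(\Pi_q(k))=\Pi_q(k)-(k-1)=q\binom k2+1=B_q(k)$. So $f$ is a surjection of $\N$ onto $\N$ which doubles exactly the values $B_q(k)=c_q(q(k-1)+1)$, and every other value has a single preimage. Equivalently, $f(n)=1+\#\{m<n: m\text{ not }(q+2)\text{-gonal}\}$, so that $f\circ b_q=\mathrm{id}$.

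Next, for $0\leq t\leq q$, I guess an explicit set $E_t\ni1$ with
\[
 f^{\circ t}(n)=n+1-\#\{e\in E_t:e\leq n\}.
\]
The base case is $E_0=\{1\}$. For $t=1$ the description of $f$ gives $E_1=\{1\}\cup\{\Pi_q(k)+1:k\geq1\}$, since $\Pi_q(k)+1=B_q(k)+k$. The target is $E_q=\{c_q(j):j\geq1\}$, the complement of $a_q$, which by~\eqref{eq:cclosed} is $\{1\}\cup\{B_q(m)+im:m\geq1,\ 0\leq i<q\}$. This suggests that $E_t$ should consist of $1$ together with, in each block $[B_q(m),B_q(m+1))$, $t$ of the $q$ points of the arithmetic progression $B_q(m)+im$. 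The block points would be shifted, so that the $t=1$ point is $B_q(m)+m$ up to a reindexing of $m$. I would pin down the exact shift by computing $f^{\circ 2}$ for small $q$ by hand.

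The inductive step is $f^{\circ(t+1)}=f^{\circ t}\circ f$. The map $f$ stalls exactly at the $n$ with $f(n)=B_q(k)$, which contributes the points $\Pi_q(k)+1$. Otherwise $f^{\circ t}\circ f$ stalls at $n$ exactly when $f(n)+1\in E_t$, and such $n$ are obtained by pulling back $E_t$ through the explicit inverse $b_q$-type shift $n\mapsto n+h_q(n)$. So $E_{t+1}=\{1\}\cup\{\Pi_q(k)+1\}\cup\{\text{pullback of }E_t\setminus\{1\}\}$. Pulling back a point $B_q(m)+im$ adds $h_q$ at that point, which should move it to the next progression position in a later block. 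This is the content of the identity $\Pi_q(k)-B_q(k)=k-1$ and its neighbours.

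The main obstacle is this bookkeeping. One must check that the pullback of a progression point lands on another progression point, and that it never collides with a point $\Pi_q(k)+1$ or with $1$. Only then is the count additive and $E_t$ exactly as guessed. I would first try to verify the claim directly by comparing $h_q(B_q(m)+im)$ with $m$, using $L_k=\Pi_q(k)+1$ and the fact that $h_q=k$ on $[L_k,L_{k+1})$. If the shift pattern turns out messier than expected, I would fall back on a counting argument. For that I would use $|\{n: f^{\circ q}(n)=v\}|$ and show it equals $1+\#\{\text{consecutive missing integers starting at }v+C_q(\cdot)\}$, reducing~\eqref{eq:fC} to the gap formula~\eqref{eq:gaps}.
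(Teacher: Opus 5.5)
Your reduction of \eqref{eq:hiccup-f} to \eqref{eq:fC} and your description of $f_q$ are correct. The stall-set induction is also a workable route. However, the proposal stops exactly where the proof begins. You never state $E_t$ for $2\leq t\leq q$ and never carry out the pullback, and that is the entire content of \eqref{eq:fC}.

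Your description of that step is also inaccurate. The inverse of $f_q$ on non-gonal integers is not $v\mapsto v+h_q(v)$: for $q=2$ one has $b_2(3)=5$ but $3+h_2(3)=4$. The correct recursion is
\[
 E_{t+1}=\{1\}\cup\{\Pi_q(k)+1:k\geq1\}\cup\{b_q(e-1)+1:e\in E_t,\ e\geq2\},\qquad
 b_q(v)=v+\#\{k\geq1:B_q(k)\leq v\}.
\]
The second formula holds because exactly $\Pi_q(k)-k=B_q(k)-1$ non-gonal integers are at most $\Pi_q(k)$. The collisions you worry about cannot occur. The middle set comes from $(q+2)$-gonal $n$, the last from the non-gonal $n=b_q(e-1)$, and $b_q$ is injective.

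The missing claim is
\[
 E_t=\{1\}\cup\{B_q(m)+im:\ m\geq1,\ 1\leq i\leq t\}\qquad(1\leq t\leq q),
\]
with no reindexing. For $t=1$ this is your identity $\Pi_q(m)+1=B_q(m)+m$. For the step from $t$ to $t+1\leq q$, the points of $E_t$ other than $1$ have $1\leq i\leq q-1$. Hence $B_q(m)<B_q(m)+im<B_q(m+1)$, and therefore $b_q(B_q(m)+im-1)+1=B_q(m)+(i+1)m$.

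So each point moves to the next term of the same progression, not to a later block. Only the final image $B_q(m)+qm=B_q(m+1)$ is the first point of the next block. At these points $h_q$ also equals $m$, which is why your shift happens to give the right answer there, but that has to be checked. At $t=q$, the identity $B_q(m)+qm=B_q(m+1)$ turns $E_q$ into $\{B_q(m)+im:m\geq1,\ 0\leq i\leq q-1\}$. This is the complement of $a_q$ by \eqref{eq:cclosed}.

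Completed this way, your argument differs from the paper's. The paper follows the orbit of $n=L_k+r$ through $[L_k,L_{k+1})$ and $[L_{k-1},L_k)$ to get the explicit formula \eqref{eq:fblock}, which it reuses afterwards for \eqref{eq:cloitre-nested}. Your induction avoids the case analysis in $r$ and $\ell$ and describes every intermediate iterate $f_q^{\circ t}$ with $t\leq q$.
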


\begin{proof}
The case $n=1$ is immediate. For $n\geq2$ write $n=L_k+r$ with
$k\geq1$ and $0\leq r\leq qk$, and put
$\ell=\min\{q,1+\floor{r/k}\}$. On $[L_k,L_{k+1}-1]$ the map $f_q$
subtracts $k$, and on $[L_{k-1},L_k-1]$ it subtracts $k-1$, so
\[
 f_q^{\circ t}(L_k)=L_k-k-(t-1)(k-1)
 \qquad(1\leq t\leq q).
\]
These values lie in $[L_{k-1},L_k-1]$ and the last one is $L_{k-1}$
(for $k=1$ this interval is $\{1\}$, which $f_q$ fixes). As $f_q$ is
nondecreasing, the first $q$ iterates of $n$ do not fall below
$L_{k-1}$. The iterates $n,n-k,n-2k,\ldots$ remain in
$[L_k,L_{k+1})$ for $1+\floor{r/k}$ steps, so exactly $\ell$ of the
first $q$ steps subtract $k$ and the others subtract $k-1$, which gives
\begin{equation}\label{eq:fblock}
 f_q^{\circ q}(n)=n-q(k-1)-\ell.
\end{equation}
By~\eqref{eq:cclosed},
\[
 c_q\bigl(q(k-1)+1+t\bigr)
 =1+q\binom{k}{2}+tk\qquad(0\leq t\leq q),
\]
and the next missing integer is $c_q(qk+2)=L_{k+1}$. The missing
integers up to $L_k+r$ are therefore those of index at most
$q(k-1)+1+\ell$, so $C_q(n)=q(k-1)+1+\ell$. This is~\eqref{eq:fC},
and~\eqref{eq:aC} gives~\eqref{eq:hiccup-f}.
\end{proof}

The same computation also proves~\eqref{eq:cloitre-nested}.
By~\eqref{eq:fblock}, $f_q^{\circ q}(n)=L_k$ when $n=\Pi_q(k+1)$
(see also~\cite[Lemma 4.2]{CloitreNested}), and
$L_{k-1}\leq f_q^{\circ q}(n)\leq L_k-1$ when $L_k\leq n<\Pi_q(k+1)$,
since $f_q^{\circ q}$ is nondecreasing and equals $L_k-1$ at
$n=\Pi_q(k+1)-1$. In both cases, and trivially for $n=1$,
\[
 h_q\bigl(f_q^{\circ q}(n)\bigr)=h_q(n+1)-1,
\]
and substituting $f_q(u)=u-h_q(u)$ gives the recurrence.

Theorem~\ref{thm:fC} also gives an identity for
$b_q=P_q^{\,*}$, the sequence of positive integers that are not
$(q+2)$-gonal.

\begin{corollary}
For $q,n\geq1$,
\begin{equation}
 b_q^{\circ q}(n)=a_q(n)-1.
\end{equation}
\end{corollary}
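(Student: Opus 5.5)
The idea is that $b_q$ is, essentially, the inverse of $f_q$ on the right integers. Then $q$ applications of $b_q$ undo the $q$ iterates of $f_q$ in Theorem~\ref{thm:fC}, and \eqref{eq:hiccup-f} supplies the right-hand side. Concretely, $b_q(n)$ is the $n$th positive integer that is not of the form $\Pi_q(k)$ with $k\geq1$. Since $h_q(u)$ counts the $(q+2)$-gonal numbers strictly below $u$, every non-polygonal $u$ satisfies
\[
u-h_q(u)=\#\{v\leq u:\ v \text{ not }(q+2)\text{-gonal}\}.
\]
This means $f_q(b_q(n))=n$ for all $n\geq1$. In other words, $b_q$ is a right inverse of $f_q$: for each $n$ it picks the unique element of $f_q^{-1}(n)$ that is not polygonal. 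By the characterization $f_q(u+1)=f_q(u)\iff u=\Pi_q(k)$, it is the largest element of the fibre.

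Iterating, $f_q^{\circ q}(b_q^{\circ q}(n))=n$. By \eqref{eq:fC}, $f_q^{\circ q}(m)=m+1-C_q(m)$, so $m=b_q^{\circ q}(n)$ satisfies $m+1-C_q(m)=n$. It remains to identify which solution $m$ of this equation $b_q^{\circ q}(n)$ is. I claim it is $m=a_q(n)-1$. By \eqref{eq:aC}, $m=a_q(n)-1$ gives
\[
m+1-C_q(m)=n+q+C_q(n)-C_q(a_q(n)-1).
\]
Now $a_q(n)$ is not missing, and the invariant \eqref{eq:invariant} gives $C_q(a_q(n))=C_q(n)+q$, so $C_q(a_q(n)-1)=C_q(n)+q$. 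Hence $m=a_q(n)-1$ is indeed a solution.

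The main obstacle is uniqueness: the map $m\mapsto m+1-C_q(m)$ is nondecreasing but not injective, and its fibre over $n$ is an interval. I would show that $b_q^{\circ q}(n)$ is the \emph{largest} element of the fibre $\{m: f_q^{\circ q}(m)=n\}$. First, the largest element of each fibre of $f_q$ is non-polygonal and equals $b_q$, as noted above. Second, a composition of nondecreasing surjections, each with right inverse given by "largest element of the fibre", has right inverse "largest element of the composite fibre". Indeed, if $f(m_1)\leq f(m_2)$ with $m_1$ maximal in its fibre, then taking larger preimages at each stage stays maximal. Surjectivity of $f_q$ holds since it increases by $0$ or $1$ and $f_q(1)=1$.

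Finally, the largest $m$ with $m+1-C_q(m)=n$ is the $m$ just before the function increments. The function increases by $1$ at $m+1$ exactly when $m+1$ is not missing. The smallest non-missing $m+1$ with $m+1-C_q(m+1)\geq n$ is the $n$th value of $a_q$, because $a_q(n)-C_q(a_q(n))=n$ by \eqref{eq:aC} and \eqref{eq:invariant}. Hence the largest $m$ in the fibre is $a_q(n)-1$, which gives $b_q^{\circ q}(n)=a_q(n)-1$.
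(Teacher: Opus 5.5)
Your proof is correct and is essentially the paper's argument. Your description of $b_q(n)$ as the largest element of $f_q^{-1}(n)$ is the paper's equivalence $f_q(t)\leq n\Longleftrightarrow t\leq b_q(n)$. The paper composes that equivalence $q$ times and reads \eqref{eq:fC} as $f_q^{\circ q}(t)=1+\#\{k:a_q(k)\leq t\}$, which gives $t\leq b_q^{\circ q}(n)\Longleftrightarrow t<a_q(n)$.

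Two steps are worded more loosely than they need to be. The composition step is cleanest stated as that same equivalence: if $m_1$ is maximal in its fibre and $f(m)\leq f(m_1)$, then $m\leq m_1$. Your final step reduces to $f_q^{\circ q}(a_q(n)-1)=n<n+1=f_q^{\circ q}(a_q(n))$.
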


\begin{proof}
Since $f_q(t)=1+\#\{k:b_q(k)<t\}$ (see~\cite[Section 6.1]{CloitreNested}),
we have, for positive integers $t$ and $n$,
\[
 f_q(t)\leq n\quad\Longleftrightarrow\quad t\leq b_q(n).
\]
Applied $q$ times, the equivalence also holds for $f_q^{\circ q}$
and $b_q^{\circ q}$. By~\eqref{eq:fC},
$f_q^{\circ q}(t)=1+\#\{k:a_q(k)\leq t\}$, so $t\leq b_q^{\circ q}(n)$
if and only if $t<a_q(n)$.
\end{proof}

For example, $b_2$ begins $2,3,5,6,7,8,10,\ldots$, and
$b_2(b_2(3))=b_2(5)=7=a_2(3)-1$. For $q=3$, $b_3^{\circ3}=a_3-1$, where
$b_3$ enumerates the non-pentagonal numbers.

Since $\Pi_q(k)\geq k$, we also have
\begin{align}
 f_q(n)&=\#\{k\in\N:k\leq n\leq \Pi_q(k)\},\\
 \sum_{n=1}^{N}f_q(n)
 &=\#\{(u,v)\in\N^2:v\leq u\leq \Pi_q(v),\ u\leq N\}.
\end{align}
These identities extend the square case in our earlier
paper~\cite[Section 6.2]{CloitreNested}.
Let $K_{d,q}$ be the generalized Connell sequence with modulus
$d\geq2$ and row lengths increasing by $q$, denoted $C_{m,r}$ with
$m=d$ and $r=q$ by Iannucci and Mills-Taylor~\cite{IM}. In their
formula (6), the denominator $2k$ should be $2r$. With this correction,
the formula becomes
\[
 K_{d,q}(n)=n+(d-1)\bigl(f_q(n)-1\bigr),
\]
since both sides start at $1$ and increase by $1$ after a
$(q+2)$-gonal index and by $d$ otherwise.

\subsection{Dispersions}

\begin{proposition}
For $q\geq1$, $j\geq1$, and $t\geq0$, set
\begin{equation}
 \mathcal D_q(j,t)=c_q(j)+jt+q\binom{t+1}{2}.
\end{equation}
Every positive integer occurs exactly once in this array, and
\begin{equation}\label{eq:rowsuccessor}
 a_q(\mathcal D_q(j,t))=\mathcal D_q(j,t+1).
\end{equation}
Equivalently, order the pairs $(u,v)\in\Nzero^2$ first by $u+qv$
and then by $v$. The rank of $(u,v)$, counted
from $1$, is
\begin{equation}\label{eq:rank}
 R_q(u,v)=c_q(u+1)+(u+1)v+q\binom{v+1}{2},
\end{equation}
and $a_q$ maps the rank of $(u,v)$ to the rank of $(u,v+1)$.
\end{proposition}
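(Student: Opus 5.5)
The plan is to read the array row by row through Theorem~\ref{thm:iterates}. By~\eqref{eq:iterate}, with $n=c_q(j)$ and $C_q(c_q(j))=j$, we get
\[
 a_q^{\circ t}(c_q(j))=c_q(j)+jt+q\binom{t+1}{2}=\mathcal D_q(j,t).
\]
So row $j$ is the orbit of $c_q(j)$ under $a_q$, and~\eqref{eq:rowsuccessor} follows at once by applying $a_q$ once more.

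For the partition property, I will use that $a_q$ is strictly increasing, hence injective, and that its image is exactly the set of non-missing integers. Every positive integer $N$ then has a unique backward chain $N=a_q(N_1)$, $N_1=a_q(N_2)$, and so on. The chain strictly decreases, since $a_q(n)>n$, so it stops at a unique missing integer $c_q(j)$, reached after a unique number $t$ of steps. Thus $N=a_q^{\circ t}(c_q(j))$ for exactly one pair $(j,t)$. Uniqueness needs one more remark: two orbits cannot meet. This holds because each $c_q(j)$ is not in $\im(a_q)$ and $a_q$ is injective, so an equality $a_q^{\circ t}(c_q(j))=a_q^{\circ t'}(c_q(j'))$ can be peeled back to an equality between a missing integer and an image value, unless $t=t'$ and $j=j'$. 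This settles the first claim.

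For the rank interpretation, set $j=u+1$ and $t=v$, so that~\eqref{eq:rank} is exactly $\mathcal D_q(u+1,v)$. Then $a_q$ maps $R_q(u,v)$ to $R_q(u,v+1)$ by~\eqref{eq:rowsuccessor}. It remains to show that $R_q$ is increasing for the stated order, since a bijection $\Nzero^2\to\N$ that is increasing for a well-order of type $\omega$ must be the rank function. I will fix $s=u+qv$ and let $v$ range over $0\le v\le\lfloor s/q\rfloor$, with $u=s-qv$. Using~\eqref{eq:cclosed} with $u=qm+r$, I expect $R_q$ to be an explicit quadratic in $(s,v)$.

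The main step, and the one I expect to be the obstacle, is the comparison inside a fixed diagonal: show that $R_q(s-q(v+1),v+1)=R_q(s-qv,v)+1$. Raising $v$ by one lowers $m$ by one and keeps $r$, which is what should make the values along a diagonal consecutive. I will also check that the last element of diagonal $s$ is followed by the first element of diagonal $s+1$, namely $R_q(s+1,0)=c_q(s+2)$. This amounts to $c_q(s+2)-c_q(s+1)=\lceil (s+1)/q\rceil$ being $1$ plus the number of pairs on diagonal $s$ other than $v=0$, which is $\lfloor s/q\rfloor$. This is~\eqref{eq:gaps}, since $\lceil (s+1)/q\rceil=1+\lfloor s/q\rfloor$. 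Together these show that $R_q$ enumerates $\N$ in the stated order, which also gives a second proof of the bijectivity.
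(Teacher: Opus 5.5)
Your proposal is correct and follows the paper's proof. Rows are the orbits $a_q^{\circ t}(c_q(j))$ by Theorem~\ref{thm:iterates}, the partition comes from the unique chain of preimages, and the rank formula rests on \eqref{eq:comprec} and \eqref{eq:gaps}. The one difference is in the rank part: the paper counts predecessors directly to get the rank $c_q(u+qv+1)+v$ and then applies \eqref{eq:comprec} $v$ times, whereas you check that $R_q$ increases by $1$ at each step of the order. Your diagonal step is just one application of \eqref{eq:comprec} with $j=u-q+1$, so the step you flag as the main obstacle is routine.
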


\begin{proof}
Since $C_q(c_q(j))=j$, Theorem~\ref{thm:iterates} gives
$\mathcal D_q(j,t)=a_q^{\circ t}(c_q(j))$, which
proves~\eqref{eq:rowsuccessor}. Every value of $a_q$ has a unique
preimage, which is smaller. Starting from any positive integer and
repeatedly taking preimages therefore ends at a unique missing integer
$c_q(j)$ after a unique number $t$ of steps. Hence every positive
integer occurs exactly once in the array.

For the ranks, the number of pairs with $u+qv<h$ is
$\sum_{s=0}^{h-1}(\lfloor s/q\rfloor+1)=c_q(h+1)-1$, and among the
pairs of weight $u+qv$ exactly $v$ precede $(u,v)$. The rank of $(u,v)$
is therefore $c_q(u+qv+1)+v$, which becomes~\eqref{eq:rank} after $v$
applications of~\eqref{eq:comprec}.
\end{proof}

For $q=3$ the dispersion begins
\[
 \begin{array}{c|rrrrr}
 j\backslash t&0&1&2&3&4\\\hline
 1&1&5&12&22&35\\
 2&2&7&15&26&40\\
 3&3&9&18&30&45\\
 4&4&11&21&34&50\\
 5&6&14&25&39&56\\
 6&8&17&29&44&62
 \end{array}
\]
The rank array of Kimberling and Brown~\cite[Section 1]{KB}, indexed
from $0$, is $a_q^{\mathrm{KB}}(i,j)=R_q(j,i)$ in our notation, so
$\mathcal D_q(j,t)=a_q^{\mathrm{KB}}(t,j-1)$. In their notation,
$\mathcal D_q=A(\widehat P_q)$, with first column $\widehat P_q$ and
first row $P_q$. Its rows are the orbits of $a_q=\widehat P_q^{\,*}$,
so it is the dispersion of $a_q$ in the terminology of their Section 1.

\section{Combinatorial interpretations}\label{sec:counting}

\subsection{Rows}

Take $q$ rows of length $1$, then $q$ rows of length $2$, and so on, and
label all positions consecutively from $1$. The first label of the
$r$th row of length $m$, where $1\leq r\leq q$, is
\[
 1+q\binom m2+(r-1)m=c_q\bigl(q(m-1)+r\bigr),
\]
so deleting the first position of every row leaves exactly the values
of $a_q$, in increasing order. Figure~\ref{fig:rows} arranges these
rows in the plane for $q\leq4$.

\begin{itemize}
\item For $x=3$, the rows of lengths $1,2,3,\ldots$ form a triangle
whose left side carries the missing labels.
\item For $x=4$, the two rows of length $m$ are a row and a column of
$m$ points that extend an $(m-1)\times m$ rectangle to an
$m\times(m+1)$ rectangle, and the first point of each is missing.
\item For $x=5$, the three rows of length $m$ are three consecutive
sides of a hexagonal ring of the triangular lattice, each side taken
without its last point, and their first points are missing. This is
the hexagonal arrangement described in our earlier
paper~\cite[Theorem 5.3 and Figure 3]{Cloitre}.
\item For $x=6$, the four rows of length $m$ are the four half-open
sides of the square ring $|u|+|v|=m$, which has $4m$ points, and the
missing labels occur at its four vertices on the axes.
\end{itemize}

\newcommand{\rowpicture}[1]{%
\begin{tikzpicture}[x=.57cm,y=.57cm,
 kept/.style={circle,draw=black!30,fill=white,minimum size=4.7mm,
 inner sep=0pt,font=\scriptsize},
 missing/.style={rectangle,draw=black,fill=black!12,minimum size=4.7mm,
 inner sep=0pt,font=\scriptsize}]
\foreach \m in {1,...,4}{
 \pgfmathtruncatemacro{\lastside}{#1-1}
 \foreach \r in {0,...,\lastside}{
  \pgfmathtruncatemacro{\lastpoint}{\m-1}
  \foreach \t in {0,...,\lastpoint}{
   \pgfmathtruncatemacro{\lab}{#1*\m*(\m-1)/2+\r*\m+\t+1}
   \ifcase#1\relax
   \or
    \pgfmathsetmacro{\xx}{\t-(\m-1)/2}
    \pgfmathsetmacro{\yy}{-(\m-1)*sqrt(3)/2}
   \or
    \ifnum\r=0
     \pgfmathsetmacro{\xx}{\t}\pgfmathsetmacro{\yy}{\m}
    \else
     \pgfmathsetmacro{\xx}{\m}\pgfmathsetmacro{\yy}{\m-\t}
    \fi
   \or
    \ifcase\r
     \pgfmathsetmacro{\uu}{-\m}\pgfmathsetmacro{\vv}{\t}
    \or
     \pgfmathsetmacro{\uu}{-\m+\t}\pgfmathsetmacro{\vv}{\m}
    \or
     \pgfmathsetmacro{\uu}{\t}\pgfmathsetmacro{\vv}{\m-\t}
    \fi
    \pgfmathsetmacro{\xx}{\uu+\vv/2}
    \pgfmathsetmacro{\yy}{sqrt(3)*\vv/2}
   \or
    \ifcase\r
     \pgfmathsetmacro{\xx}{\m-\t}\pgfmathsetmacro{\yy}{\t}
    \or
     \pgfmathsetmacro{\xx}{-\t}\pgfmathsetmacro{\yy}{\m-\t}
    \or
     \pgfmathsetmacro{\xx}{-\m+\t}\pgfmathsetmacro{\yy}{-\t}
    \or
     \pgfmathsetmacro{\xx}{\t}\pgfmathsetmacro{\yy}{-\m+\t}
    \fi
   \fi
   \ifnum\t=0
    \node[missing] at (\xx,\yy) {\lab};
   \else
    \node[kept] at (\xx,\yy) {\lab};
   \fi
  }
 }
}
\ifnum#1>1 \node[font=\scriptsize] at (0,0) {$0$};\fi
\end{tikzpicture}}

\begin{figure}[tbp]
\centering
\begin{minipage}[b]{.47\linewidth}\centering
\rowpicture{1}\\[3mm] $x=3$, triangular rows
\end{minipage}\hfill
\begin{minipage}[b]{.47\linewidth}\centering
\rowpicture{2}\\[3mm] $x=4$, rectangles
\end{minipage}
\par\medskip
\begin{minipage}[b]{.47\linewidth}\centering
\rowpicture{3}\\[3mm] $x=5$, half-hexagonal rings
\end{minipage}\hfill
\begin{minipage}[b]{.47\linewidth}\centering
\rowpicture{4}\\[3mm] $x=6$, square rings
\end{minipage}
\caption{Rows of lengths $1$ to $4$ for the $(0,x,1,2)$-hiccup
sequences, $3\leq x\leq6$. Squares mark the first label of each row,
and the remaining labels are the values of the sequence. The point
labeled $0$ only marks the origin.}
\label{fig:rows}
\end{figure}

\subsection{Polyominoes}

The thumbtack interpretation of $a_2$ appears in the OEIS entry
\seqnum{A080037} and in our earlier paper~\cite[Theorem 5.2]{Cloitre}.
The same count is the least number of vertices of a polyomino with
$n$ cells. Harary and Harborth~\cite{HH} determined the extremal
numbers of vertices and edges of animals with a given number of cells.
See Malen, Rold\'an, and Toal\'a-Enr\'{\i}quez~\cite{MRT} for an account
and extensions. We recall the result and give a short proof. Here a
polyomino is a finite edge-connected union of unit squares of the square
lattice, possibly with holes.

\begin{proposition}
Among polyominoes with $n\geq1$ cells, the least number of vertices,
that is, of lattice points that are corners of some cell, is
\begin{equation}
n+1+\ceil{2\sqrt n}=a_2(n).
\end{equation}
\end{proposition}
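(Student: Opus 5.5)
The identity $n+1+\ceil{2\sqrt n}=a_2(n)$ is already available. Proposition~\ref{prop:radical} with $q=2$ gives $C_2(n)=\ceil{2\sqrt n}-1$, and then~\eqref{eq:aC} yields $a_2(n)=n+2+C_2(n)=n+1+\ceil{2\sqrt n}$. The substance is therefore the extremal statement, which I split into an upper bound by construction and a lower bound by an Euler-type count.

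For the upper bound I build, for each $n$, a polyomino with exactly $n+1+\ceil{2\sqrt n}$ vertices. I add cells in the order of a square spiral, or equivalently fill rows of an $m\times m$ or $m\times(m+1)$ rectangle one at a time. The first cell of a new row or column adds $2$ new lattice points, and every later cell in that row adds $1$. This is exactly the rule of Figure~\ref{fig:rows} for $x=4$, where first points are missing, i.e.\ cost an extra increment. Hence the vertex count after $n$ cells is $4+(n-1)+(\text{number of rows started after the first})$, which matches the hiccup sum $a_2(n)=4+(n-1)+(C_2(n)-1)$. The cleanest way to verify this is to check that the $j$th new row starts at cell $c_2(j)$, using~\eqref{eq:cclosed} with the rectangle dimensions $m\times m$ and $m\times(m+1)$.

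For the lower bound I take a polyomino $P$ with $n$ cells, $V$ vertices and $E$ unit edges, and $h$ holes. Euler's formula on the planar graph gives $V-E+(n+1+h)=2$. Counting cell–edge incidences gives $4n=2E-p$, where $p$ is the total number of boundary edges (outer plus hole boundaries). Combining these, $V=n+1+p/2-h\geq n+1+p/2$ when $h=0$, and each hole only increases $p$ more than it subtracts. To see this, note that a hole boundary has length at least $4$ and contributes $+2$ to $p/2$ against $-1$. So it suffices to show that $p\geq 2\ceil{2\sqrt n}$. The outer boundary is at least the perimeter of the bounding box $a\times b$, namely $2(a+b)$, and $ab\geq n$. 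Thus $a+b\geq\ceil{2\sqrt{ab}}\geq\ceil{2\sqrt n}$, using AM–GM together with integrality of $a+b$.

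The main obstacle is making the hole bookkeeping and the edge-count identity rigorous. A polyomino that is only edge-connected may still touch itself at corners, and the Euler characteristic of the cell complex must be handled with care there. I would first apply Euler's formula to the graph of lattice vertices and unit edges of $P$, taking faces to be the cells, the holes, and the outer face. Then I would use the fact that the boundary of each hole has at least $4$ edges. Corner touchings only merge vertices in a way already accounted for by $V$, so the inequality $V\geq n+1+p_{\mathrm{out}}/2$ should survive.
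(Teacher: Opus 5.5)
Your proof is correct and is essentially the paper's: Euler's formula with $E=2n+\mathcal P/2$ gives $V=n+1+\mathcal P/2-H$, and each hole costs at least $4$ boundary edges. The outer boundary has at least $2(h+w)$ edges with $hw\geq n$, and the square-spiral filling attains equality; counting new corners per added cell instead of computing the half-perimeter is an inessential variation, and your worry about corner touchings is harmless because the faces of the lattice graph are still exactly the $n$ cells, the bounded components of the complement, and the outer face.
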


\begin{proof}
Let $V$, $E$, $\mathcal P$, and $H$ be the numbers of vertices, unit
edges, boundary edges, and holes. Each cell has $4$ sides, and each
interior edge is shared by $2$ cells, so $E=2n+\mathcal P/2$.
Euler's formula $V-E+n=1-H$ gives
\[
V=n+1+\mathcal P/2-H.
\]
If the polyomino meets $h$ rows and $w$ columns, its outer boundary has
at least $2(h+w)$ edges and each hole boundary at least $4$, so
\[
V\geq n+1+h+w+H\geq n+1+\ceil{2\sqrt n}+H,
\]
where the last step uses $n\leq hw$ and the integrality of $h+w$.

Equality holds for the polyominoes obtained from an $m\times m$ square
by adding cells one at a time along a new row until an $m\times(m+1)$
rectangle is reached, and then along a new column until an
$(m+1)\times(m+1)$ square is reached. They have no holes, and their
half-perimeter is $2m$ at $n=m^2$, $2m+1$ for $m^2<n\leq m(m+1)$, and
$2m+2$ for $m(m+1)<n\leq(m+1)^2$, which is $\ceil{2\sqrt n}$ in each
case. Finally,
\[
n+1+\ceil{2\sqrt n}=\ceil{(1+\sqrt n)^2}=a_2(n)
\]
by Proposition~\ref{prop:radical}.
\end{proof}

\subsection{Monochromatic pairs}

For $q\geq1$ and $N\geq0$, let $E_q(N)$ be the least number of
monochromatic edges in a coloring of the vertices of $K_N$ with at most
$q$ colors. A coloring with classes of sizes $v_1,\ldots,v_q\geq0$ has
$\sum_i\binom{v_i}{2}$ monochromatic edges.

\begin{proposition}\label{prop:monochromatic}
For $q,j,n\geq1$,
\begin{align}
 E_q(j+q-1)&=c_q(j)-1,\\
 a_q(n)-n&=\min\{N\geq0:E_q(N)\geq n\}.
\end{align}
Thus $a_q(n)-n$ is the least number of vertices for which every
coloring with $q$ colors has at least $n$ monochromatic pairs.
\end{proposition}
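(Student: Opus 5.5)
The plan is to compute $E_q(N)$ exactly from the balanced coloring and compare it with the closed formula~\eqref{eq:cclosed}. By convexity of $v\mapsto\binom v2$, the minimum of $\sum_i\binom{v_i}2$ over $v_1+\cdots+v_q=N$ is attained when the class sizes differ by at most $1$. Indeed, if $v_a\geq v_b+2$, moving one vertex from class $a$ to class $b$ changes the count by $-(v_a-1)+v_b<0$. Writing $N=qm+s$ with $0\leq s<q$, the minimum has $s$ classes of size $m+1$ and $q-s$ of size $m$, so
\[
E_q(N)=q\binom m2+sm .
\]

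Next I would substitute $N=j+q-1$. Writing $j-1=qm'+r$ as in~\eqref{eq:cclosed} gives $N=q(m'+1)+r$, so $m=m'+1$ and $s=r$. Then
\[
E_q(N)=q\binom{m'+1}2+r(m'+1)=c_q(j)-1,
\]
which is the first identity. This takes care of every $N\geq q$. For $0\leq N\leq q-1$ every vertex can receive its own color, so $E_q(N)=0$.

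For the second identity, note that $E_q$ is nondecreasing: adding a vertex to a smallest class raises the count by $m\geq0$, and the optimum is monotone. The values for $N=q,q+1,\ldots$ are $c_q(1)-1,c_q(2)-1,\ldots$, which start at $0$. Since $c_q(1)=1$ and $n\geq1$, the least $N$ with $E_q(N)\geq n$ satisfies $N\geq q+1$. Writing it as $N=j+q-1$, the index $j$ is the least one with $c_q(j)-1\geq n$, that is, $c_q(j)>n$. That least $j$ is $C_q(n)+1$, because exactly $C_q(n)$ missing integers are at most $n$. Hence
\[
\min\{N:E_q(N)\geq n\}=C_q(n)+q=a_q(n)-n
\]
by~\eqref{eq:aC}.

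The final sentence of the statement is then just a restatement of the second identity. The only delicate points are the bookkeeping between $m$ and $m'$ and the small-$N$ range where $E_q$ vanishes. Neither is a real obstacle; the convexity argument is standard.
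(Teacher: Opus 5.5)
Your proof is correct and follows the paper's argument: the exchange argument gives balanced class sizes, matching the resulting count against~\eqref{eq:cclosed} gives the first identity, and monotonicity of $E_q$ with $c_q(C_q(n))\leq n<c_q(C_q(n)+1)$ and~\eqref{eq:aC} gives the second. The differences are cosmetic: you parametrize by $N=qm+s$ instead of $j-1=qm+r$, and you treat $N<q$ separately, which the paper covers through monotonicity.
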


\begin{proof}
If $v_i\geq v_{i'}+2$, moving a vertex from class $i$ to class $i'$
lowers the sum by $v_i-v_{i'}-1>0$, so the minimum is reached exactly
when all sizes differ by at most $1$. With $j-1=qm+r$ and $0\leq r<q$,
the balanced sizes for $N=j+q-1$ are $m+1$, taken $q-r$ times, and
$m+2$, taken $r$ times, hence
\[
 E_q(j+q-1)=(q-r)\binom{m+1}{2}+r\binom{m+2}{2}=c_q(j)-1.
\]
If $j=C_q(n)$, then $c_q(j)\leq n<c_q(j+1)$, that is,
$E_q(j+q-1)<n\leq E_q(j+q)$. Since $E_q$ is nondecreasing, the least
$N$ with $E_q(N)\geq n$ is $j+q$, which equals $a_q(n)-n$
by~\eqref{eq:aC}.
\end{proof}

The same argument is used in Tur\'an's theorem. The monochromatic
pairs are the edges missing from the complete $q$-partite graph.
For $q=3$ and $n=8$, we have $a_3(8)-8=9$. Three
classes of size $3$ contain $9$ monochromatic pairs, while on $8$
vertices the sizes $3,3,2$ give only $7$. Thus $9$ is the least number
of vertices that guarantees at least $8$ monochromatic pairs.

\section{Ramsey core numbers}\label{sec:ramsey}

A graph has a \emph{nonempty $t$-core} if it contains a nonempty
subgraph of minimum degree at least $t$. It has none if and only if its
vertices can be deleted one by one, each having degree at most $t-1$
when it is deleted, and such a graph is called
\emph{$(t-1)$-degenerate}.

For positive integers $t_1,\ldots,t_\ell$, the \emph{Ramsey core
number} $\rc(t_1,\ldots,t_\ell)$ is the least $N$ such that every
edge coloring of $K_N$ with $\ell$ colors has a nonempty $t_i$-core
in some color $i$. Bickle introduced and determined these
numbers~\cite{Bickle}. We express his formula through $C_1$ and $a_1$.

\subsection{Bickle's formula}

Put
\[
d_i=t_i-1,\qquad D=\sum_{i=1}^{\ell}d_i,\qquad
p=\sum_{1\leq i<j\leq\ell}d_id_j.
\]
A $d_i$-degenerate graph on $N\geq D+1$ vertices has at most
$d_iN-\binom{d_i+1}{2}$ edges, and for $N=D+r+1$ the number of edges
of $K_N$ exceeds the sum of these bounds by
\begin{equation}
\binom N2-\sum_{i=1}^{\ell}
\left(d_iN-\binom{d_i+1}{2}\right)=\binom{r+1}2-p.
\end{equation}
If $\binom r2\leq p<\binom{r+1}2$, this quantity is positive, so every
coloring of $K_{D+r+1}$ has a nonempty $t_i$-core in some color $i$.
Bickle~\cite[Theorems 34--36]{Bickle} shows, using the decomposition
theorem of Klein and Sch\"onheim~\cite{KS} (his Theorem 35), that
$K_{D+r}$ has a coloring with no such core. His formula, written in
terms of $\sum_i t_i$ and $\ell$, becomes
\begin{equation}\label{eq:bickle}
\rc(t_1,\ldots,t_\ell)
=D+\ceil{\frac12+\sqrt{2p+\frac94}}.
\end{equation}
The integer $r$ equals $C_1(p+1)$.

\begin{corollary}
With the notation above,
\begin{align}
\rc(t_1,\ldots,t_\ell)
 &=D+1+C_1(p+1)\label{eq:multiramseyC}\\
 &=a_1(p+1)+D-p-1.
\end{align}
For two colors, this becomes
\begin{equation}
\rc(s,t)=a_1\bigl((s-1)(t-1)+1\bigr)-(s-2)(t-2).
\end{equation}
\end{corollary}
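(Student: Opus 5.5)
We take Bickle's formula~\eqref{eq:bickle} and the description just before it as given: $\rc(t_1,\ldots,t_\ell)=D+r+1$, where $r$ is the positive integer with $\binom r2\leq p<\binom{r+1}2$. The plan has three steps. First show $r=C_1(p+1)$. Then rewrite $D+1+C_1(p+1)$ using~\eqref{eq:aC}. Finally specialize to $\ell=2$.

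For the first step, use~\eqref{eq:cclosed} with $q=1$. There $r=0$ always and $m=j-1$, so $c_1(j)=1+\binom{j}{2}$. Hence $C_1(p+1)=\#\{j\geq1:\binom j2\leq p\}$. Since $\binom j2$ is nondecreasing in $j$ and strictly increasing for $j\geq1$, this count equals the largest $j$ with $\binom j2\leq p$. That is exactly the $r$ determined by $\binom r2\leq p<\binom{r+1}2$. Note $p\geq0$ and $\binom12=0$, so $r\geq1$ and the count is well defined.

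One point needs care, and it is the only real check in the argument. Bickle's formula~\eqref{eq:bickle} must give the same $r$. I would verify that $\ceil{\frac12+\sqrt{2p+\frac94}}=r+1$ exactly when $\binom r2\leq p<\binom{r+1}2$. Writing the inequality $r<\frac12+\sqrt{2p+9/4}\leq r+1$ and squaring gives $r^2-r-2\leq 2p<r^2+r-2$, up to the strictness at the endpoints. Using integrality of $p$, this is $\binom r2-1< p\leq\binom{r+1}2-1$, which is the stated range. Alternatively, one can apply Proposition~\ref{prop:radical} with $q=1$, giving $C_1(p+1)=\ceil{\sqrt{2p+2}-\frac12}$, and compare; the direct squaring seems cleaner. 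This yields~\eqref{eq:multiramseyC}.

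For the second step, \eqref{eq:aC} with $q=1$ gives $a_1(p+1)=p+2+C_1(p+1)$. Substituting, $D+1+C_1(p+1)=a_1(p+1)+D-p-1$.

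For two colors, put $d_1=s-1$ and $d_2=t-1$. Then $p=(s-1)(t-1)$ and $D=s+t-2$. A direct expansion gives
\[
D-p-1=s+t-3-(st-s-t+1)=-(st-2s-2t+4)=-(s-2)(t-2),
\]
which is the last formula.
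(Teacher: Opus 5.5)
Your proposal is correct and matches the paper's proof: you identify $r=C_1(p+1)$ through $c_1(j)=1+\binom j2$, use the integrality of $p$ to get $r<\frac12+\sqrt{2p+\frac94}\le r+1$, then apply \eqref{eq:aC} and substitute $D=s+t-2$, $p=(s-1)(t-1)$. One small slip: after squaring, the inequalities should read $r^2-r-2<2p\le r^2+r-2$, strict on the left and non-strict on the right, which is what your next line $\binom r2-1<p\le\binom{r+1}2-1$ correctly states.
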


\begin{proof}
Let $r=C_1(p+1)$. Since $c_1(j)=1+\binom j2$, we have
$\binom r2\leq p<\binom{r+1}2$, and as $p$ is an integer this gives
\[
r<\frac12+\sqrt{2p+\frac94}\leq r+1.
\]
Substituting in~\eqref{eq:bickle} yields the first identity,
and~\eqref{eq:aC} the second. For two colors, $D=s+t-2$ and
$p=(s-1)(t-1)$.
\end{proof}

For example, $\rc(3,t)=a_1(2t-1)-t+2$, which begins
$4,6,8,10,11,13,14,15,\ldots$ for $t\geq1$.

\subsection{Forests}

The identity $\rc(2,t)=a_1(t)$ is Bickle's
Theorem 38~\cite{Bickle}. It is also recorded in \seqnum{A080036} and
in our earlier paper~\cite[Theorem 5.1]{Cloitre}. We give a direct
proof.

\begin{theorem}
For $t\geq1$, let $r$ be the positive integer such that
$\binom r2<t\leq\binom{r+1}2$. Then
\begin{equation}
\rc(2,t)=t+r+1=a_1(t).
\end{equation}
\end{theorem}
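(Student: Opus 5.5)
We need to prove rc(2,t) = t+r+1 directly. A 2-core in color 1 means a cycle in color 1; no nonempty 2-core means color 1 is a forest. A $t$-core in color 2 is absent exactly when color 2 is $(t-1)$-degenerate. So $\rc(2,t)$ is the least $N$ such that $K_N$ cannot be written as the union of a forest and a $(t-1)$-degenerate graph. The identity $t+r+1=a_1(t)$ comes from~\eqref{eq:aC}: with $q=1$, $a_1(t)=t+1+C_1(t)$. Since $c_1(j)=1+\binom j2$, we have $C_1(t)=r$ exactly when $\binom r2<t\leq\binom{r+1}2$. So it remains to prove $\rc(2,t)=t+r+1$.

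For the upper bound I count edges on $N=t+r+1$ vertices. A forest has at most $N-1$ edges. A $(t-1)$-degenerate graph on $N\geq t$ vertices has at most $(t-1)N-\binom t2$ edges: delete vertices in order, and the last $t-1$ deleted contribute at most $\binom{t-1}2$. Then
\[
\binom N2-(N-1)-(t-1)N+\binom t2=\binom{r+1}2-t+1,
\]
which is positive since $t\leq\binom{r+1}2$. Hence some edge escapes both bounds, and every coloring of $K_{t+r+1}$ has a cycle in color 1 or a $t$-core in color 2. (This is the case $d_1=1$, $D=t$, $p=t-1$ of the count preceding~\eqref{eq:bickle}.)

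For the lower bound I construct a coloring of $K_{t+r}$ with a forest $F$ in color 1 and a $(t-1)$-degenerate color 2. Split the vertices into a set $A$ of $r$ vertices and a set $B$ of $t$ vertices. In $B$, fix an order $v_1,\dots,v_t$. Build the deletion order for color 2 as follows. First delete the vertices of $B$. Vertex $v_i$ must then have at most $t-1$ color-2 neighbors among the remaining vertices: $v_{i+1},\dots,v_t$ and all of $A$. That is $(t-i)+r$ edges, so I need at least $r+1-i$ of them colored 1, for $i\le r$. After $B$ is gone, $A$ has $r\leq t-1$ vertices... but only when $r\le t$, which holds since $\binom r2<t$ implies $r\le t$ except tiny cases I would check separately. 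So the $A$-part is harmless, and the whole task reduces to choosing color-1 edges at the early vertices of $B$ forming a forest. The required number of color-1 edges is $\sum_{i=1}^{r}(r+1-i)=\binom{r+1}2$, yet a forest on $t+r$ vertices has at most $t+r-1$ edges. That is too many, so the naive order fails and I would rebalance.

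The better plan is to interleave the deletion of $A$ and $B$ so that the demanded color-1 edges total at most $N-1$ and can be arranged as a spanning tree, in the spirit of the Klein--Schönheim decomposition. Concretely I would take $F$ to be a spanning tree (for instance a caterpillar or a double star). I would then delete greedily from $K_N$: at each stage remove a vertex whose remaining degree minus its remaining $F$-degree is at most $t-1$. With $N=t+r$, the initial requirement is $\deg_F(v)\geq r$ for the first deleted vertex, then $r-1$ for the next, and so on. So I want a tree in which the vertices, ordered suitably, have $F$-degrees to later vertices at least $r,r-1,\dots,1$. A tree in which vertex $u_i$ is joined to $r+1-i$ later vertices uses $\binom{r+1}2$ edges. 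A tree on $t+r$ vertices has $t+r-1$ edges, which is at least $\binom{r+1}{2}$ iff $t\ge\binom r2+1$, exactly the hypothesis. The tree is built by giving $u_1$ children $u_2,\dots$ and so on along a chain with pendant leaves, and padding leftover vertices as leaves. Verifying this tree construction and the degree counts is the main obstacle; I would check it on $t=2,3,4$ first.
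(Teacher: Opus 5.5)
Your proposal is essentially the paper's proof. The upper bound is the same edge count, with surplus $\binom{r+1}{2}-t+1>0$ on $K_{t+r+1}$. The lower bound uses the same red caterpillar: a spine $u_1,\dots,u_r$, with $u_i$ joined to $u_{i+1}$ and to pendant leaves, so that it has $r+1-i$ later neighbors, on $1+\binom{r+1}{2}\le t+r$ vertices. The check you defer is immediate: $u_i$ has blue degree $(t+r-i)-(r+1-i)=t-1$ when deleted, and the last $t$ vertices have degree at most $t-1$ anyway. Padding with extra leaves instead of the paper's isolated vertices changes nothing.

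One correction: your dismissal of the first ordering is mistaken. The bound $\binom{r+1}{2}\le t+r-1$ is exactly the hypothesis $t>\binom r2$, so that ordering already works and coincides with your final one.
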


\begin{proof}
Write $t=\binom r2+s$ with $1\leq s\leq r$. A forest on $N$ vertices
has at most $N-1$ edges and a $(t-1)$-degenerate graph at most
$(t-1)N-\binom t2$, and for $N=t+r+1$
\[
\binom N2-(N-1)-\left((t-1)N-\binom t2\right)=r-s+1>0,
\]
so $\rc(2,t)\leq t+r+1$.

For the lower bound, take a rooted tree whose internal vertices form a
path $v_1,\ldots,v_r$ from the root, where $v_i$ has $r-i+1$ children,
including $v_{i+1}$ when $i<r$. This tree has $1+\binom{r+1}2$
vertices, and adding $s-1$ isolated vertices brings the total to
$t+r$. Color the tree edges red and all other edges of $K_{t+r}$ blue.
The red graph is a forest, and when $v_1,\ldots,v_r$ are deleted in
this order each has blue degree $t-1$, after which only $t$ vertices
remain, so the blue graph is $(t-1)$-degenerate. Since $r=C_1(t)$, we
get $t+r+1=a_1(t)$.
\end{proof}

\subsection{A fixed sum}

The other complements $c_q$ appear when the sum $D$ is fixed and the
parameters $t_i$ vary. For $q\geq1$ and $D\geq q$, set
\[
\mathcal R_q(D)=
\max_{\substack{t_1,\ldots,t_q\geq2\\
                 \sum_{i=1}^q(t_i-1)=D}}
\rc(t_1,\ldots,t_q).
\]

\begin{corollary}
For integers $q\geq1$ and $D\geq q$, the maximum $\mathcal R_q(D)$
is attained when the $t_i$ differ by at most $1$, and
\begin{equation}
\mathcal R_q(D)
=D+1+C_1\!\left(\binom D2-c_q(D-q+1)+2\right).
\end{equation}
\end{corollary}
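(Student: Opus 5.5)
We start from the first identity \eqref{eq:multiramseyC} of the previous corollary, $\rc(t_1,\ldots,t_q)=D+1+C_1(p+1)$ with $p=\sum_{i<j}d_id_j$ and $d_i=t_i-1\geq1$. Since $D$ is fixed and $C_1$ is nondecreasing, maximizing $\rc$ over the admissible tuples amounts to maximizing $p$.

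We rewrite $p$ as
\[
p=\frac{D^2-\sum_i d_i^2}{2}=\binom D2-\sum_i\binom{d_i}{2}.
\]
Maximizing $p$ is therefore equivalent to minimizing $\sum_i\binom{d_i}2$ over $q$ positive integers with sum $D$. The exchange argument in the proof of Proposition~\ref{prop:monochromatic} applies here. If $d_i\geq d_{i'}+2$, moving one unit from $d_i$ to $d_{i'}$ lowers the sum by $d_i-d_{i'}-1>0$ and keeps all parts positive. So the minimum is reached exactly at balanced tuples, which are the tuples in which the $t_i$ differ by at most $1$. Balanced tuples exist and have positive parts because $D\geq q$. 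This settles the first claim.

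For the value, Proposition~\ref{prop:monochromatic} states that $E_q(j+q-1)=c_q(j)-1$. The minimum of $\sum_i\binom{v_i}2$ over $v_i\geq0$ with sum $N$ is $E_q(N)$, and for $N=D\geq q$ the balanced minimizer has all parts positive, so the positivity constraint costs nothing. Taking $j=D-q+1\geq1$ gives the minimal sum $c_q(D-q+1)-1$. Hence
\[
p_{\max}=\binom D2-c_q(D-q+1)+1,
\]
and substituting $p_{\max}+1$ into \eqref{eq:multiramseyC} gives the stated formula.

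The only step that needs care is checking that restricting to $d_i\geq1$ does not change the extremal value. This holds because the unrestricted balanced minimizer has parts $\floor{D/q}\geq1$ and $\ceil{D/q}$. Monotonicity of $C_1$ is immediate from its definition as a counting function. Strictly speaking, the maximum of $\rc$ is also attained at any non-balanced tuple whose $p$ happens to have the same $C_1(p+1)$. The claim is only that balanced tuples attain the maximum, and that is what the argument shows.
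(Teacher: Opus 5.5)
Your proposal is correct and follows the paper's proof essentially step for step. You rewrite $p=\binom D2-\sum_i\binom{d_i}2$, balance the $d_i$ (positive since $D\geq q$), evaluate the minimum as $E_q(D)=c_q(D-q+1)-1$ via Proposition~\ref{prop:monochromatic}, and conclude from \eqref{eq:multiramseyC} using that $C_1$ is nondecreasing. Your remarks that the positivity constraint costs nothing and that non-balanced tuples may tie are accurate refinements that the paper leaves implicit.
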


\begin{proof}
With $d_i=t_i-1$ and $D=\sum_id_i$ fixed,
\[
p=\sum_{i<j}d_id_j=\binom D2-\sum_i\binom{d_i}{2}
\]
is largest when the $d_i$ differ by at most $1$, and these balanced
values are positive because $D\geq q$. By
Proposition~\ref{prop:monochromatic},
\[
\max p=\binom D2-E_q(D)=\binom D2-c_q(D-q+1)+1,
\]
and the result follows from~\eqref{eq:multiramseyC}, since $C_1$ is
nondecreasing.
\end{proof}

For $q=3$ and $D=8$, the balanced parameters are $(4,4,3)$. Since
$c_3(6)=8$ and $C_1(22)=7$,
\[
\mathcal R_3(8)=9+C_1(28-8+2)=16=\rc(4,4,3).
\]

\end{document}